\numberwithin{equation}{section}
\journal{arXiv}
\newtheorem{prop}{Proposition}[section]
\newtheorem{thm}[prop]{Theorem}
\newtheorem{lem}[prop]{Lemma}
\begin{document}
\begin{frontmatter}
\title{Analytical and numerical investigation on the tempered time-fractional operator with application to the Bloch equation and the two-layered problem}

\author[qut]{Libo~Feng}

\author[qut]{Fawang~Liu\corref{cor1}}
\cortext[cor1]{Corresponding author:~f.liu@qut.edu.au. (F.~Liu).}

\author[sut]{Vo V. Anh}

\author[SIAT]{Shanlin Qin }

\address[qut]{School of Mathematical Sciences, Queensland University of Technology, GPO Box 2434, Brisbane, QLD 4001, Australia}

\address[sut]{Faculty of Science, Engineering and Technology, Swinburne University of Technology, PO Box 218, Hawthorn, VIC
3122, Australia}

\address[SIAT]{Shenzhen Institutes of Advanced Technology, Chinese Academy of Sciences, Shenzhen, 518055, China}

\begin{abstract}
In the continuous time random walk model, the time-fractional operator usually expresses an infinite waiting time probability density. Different from that usual setting, this work considers the tempered time-fractional operator, which reflects a finite waiting time probability density. Firstly, we analyse the solution of a tempered benchmark problem, which shows a weak singularity near the initial time. The L1 scheme on graded mesh and the WSGL formula with correction terms are adapted to deal with the non-smooth solution, in which we compare these two methods systematically in terms of the convergence and consumed CPU time. Furthermore, a fast calculation for the time tempered Caputo fractional derivative is developed based on a sum-of-exponentials approximation, which significantly reduces the running time. Moreover, the tempered operator is applied to the Bloch equation in nuclear magnetic resonance and a two-layered problem with composite material exhibiting distinct memory effects, for which both the analytical (or semi-analytical) and numerical solutions are derived using transform techniques and finite difference methods. Data fitting results verify that the tempered time-fractional model is much effective to describe the MRI data. An important finding is that, compared with the fractional index, the tempered operator parameter could further accelerate the diffusion. The tempered model with two parameters $\alpha$ and $\rho$ are more flexible, which can avoid choosing a too small fractional index leading to low regularity and strong heterogeneity.

\end{abstract}
\begin{keyword}
Tempered operator \sep Tempered Bloch equation \sep Two-layered problem \sep Tempered diffusion \sep Non-smooth solution
\end{keyword}

\end{frontmatter}

\section{Introduction}

As a generalisation of the Brownian random walk, the continuous-time random walk (CTRW) incorporates a waiting time probability density function (pdf) $\psi (t)$ and a jump length pdf $\eta (x)$. It assembles the particles having different jumps ($x_{1},x_{2},\ldots ,x_{i},\ldots $) at times ($t_{1},t_{2},\ldots ,t_{i},\ldots $). Defining the jump pdf $\Phi (x,t)$,
then the jump length pdf and the waiting time pdf can be deduced respectively as
\begin{equation*}
\eta (x)=\int_{0}^{\infty }\Phi (x,t)dt,\quad \psi (t)=\int_{-\infty }^{\infty }\Phi (x,t)dx.
\end{equation*}
Considering the case that the jump length and waiting time are independent, $\Phi (x,t)$ can be written as $\Phi (x,t)=\psi (t)\eta (x)$. For a given $\psi (t)$ and $\eta (x)$, the probability of finding a particle at position $x$ and time $t$ satisfies the Montroll-Weiss (MW) equation \cite{Montroll,Metzler}, which has the following form in the Fourier-Laplace space:
\begin{equation*}
\widehat{\overline{P}}(\omega ,s)=\frac{1-\overline{\psi }(s)}{s}\frac{1}{1-\overline{\psi }(s)\hat{\eta}(\omega )},
\end{equation*}
where $\overline{\psi }(s)=\int_{0}^{\infty }e^{st}\psi (t)dt$ and $\hat{\eta}(\omega )=\int_{-\infty }^{\infty }e^{i\omega x}\eta (x)dx$. When a long-tailed waiting time pdf with the asymptotic behaviour \cite{Metzler} $\psi (t)\sim A_{\alpha }(C_{\tau }/t)^{1+\alpha }$, $0<\alpha <1$, is considered, the Caputo time-fractional operator ${_{0}^{C}D_{t}^{\alpha }}$ can be introduced in the setting. In this case, the corresponding characteristic waiting time is
\begin{equation*}
T=\int_{0}^{\infty }t\psi (t)dt=A_{\alpha }C_{\tau }^{1+\alpha
}\int_{0}^{\infty }t^{-\alpha }dt=\infty .
\end{equation*}
In the real world, since the waiting times  may not be arbitrarily long with a pure power-law distribution, the concept of tempered waiting time pdf was proposed \cite{Mantegna, Koponen}, which is expected to have broader applicability \cite{Meerschaert08}. Now consider a tempered long-tailed waiting time pdf \cite{Meerschaert15} $\psi (t)\sim A_{\alpha }(C_{\tau}/t)^{1+\alpha }e^{-\rho t}$ with Laplace transform $\overline{\psi }(s)\sim1-(C_{\tau }(s+\rho ))^{\alpha }$, then the Caputo tempered time-fractional operator ${_{0}^{C}D_{t}^{(\alpha ,\rho )}}$ can be introduced as
\begin{equation*}
{_{0}^{C}D_{t}^{(\alpha ,\rho )}}u(t)=e^{-\rho t}{_{0}^{C}D_{t}^{\alpha }}
(e^{\rho t}u(t))=\frac{e^{-\rho t}}{\Gamma (1-\alpha )}\int_{0}^{t}\frac{
d(e^{\rho s}u(s))}{ds}\frac{1}{(t-s)^{\alpha }}ds.
\end{equation*}
The corresponding characteristic waiting time turns out to be
\begin{equation*}
T=\int_{0}^{\infty }t\psi (t)dt=A_{\alpha }C_{\tau }^{1+\alpha
}\int_{0}^{\infty }t^{-\alpha }e^{-\rho t}dt=A_{\alpha }C_{\tau }^{1+\alpha }\frac{\Gamma (1-\alpha )}{\rho ^{1-\alpha }},
\end{equation*}
which is finite. Motivated by this, we investigate the Caputo tempered fractional operator in this paper.

In the past two decades, some attention has been paid to the investigation of the tempered operator. In \cite{Mantegna, Koponen}, the truncated L\'{e}vy process was introduced to eliminate the arbitrarily large flights produced by L\'{e}vy stable distributions. In \cite{Cartea07}, Cartea and del-Castillo-Negrete considered the CTRWs with exponentially truncated L\'{e}vy jump pdf, which led to a transport equation with space tempered fractional derivatives to describe the interaction between long jumps and memory in the intermediate asymptotic regime. In \cite{Meerschaert08}, Meerschaert et al. proposed a novel tempered anomalous diffusion model with exponentially tempered waiting times to capture the natural cutoff of retention times in heterogeneous systems. In \cite{Zhang12}, Zhang et al. presented a tempered fractional mobile-immobile model with a tempered stable pdf for both particle jump length and resting time able to explain the sediment transport involving super-diffusive, sub-diffusive, and regular
diffusive dynamics. In \cite{Deng16}, Wu et al. derived the tempered fractional Feynman-Kac equation to describe the functional distribution of the paths of tempered anomalous dynamics. In \cite{Boniece20}, Boniece et al. constructed two classes of second-order, non-Gaussian transient anomalous diffusion models to depict the tempered fractional L\'{e}vy process.

In addition, different numerical methods were proposed to deal with the tempered fractional models \cite{Li16,Zayernouri15,Zhang16,Guo18}, most of which are based on the spatial tempered operator. Here, we concentrate on a
numerical treatment of the tempered time-fractional model. In \cite{Hanert14}, Hanert and Piret applied a Chebyshev pseudospectral method to discretise a time-space tempered fractional diffusion equation. In \cite{Chen18}, Chen and Deng proposed some high order finite difference schemes for the time-tempered fractional Feynman-Kac equation. In \cite{Ding19}, Ding and Li developed a tempered fractional-compact difference formula to deal with a time Caputo-tempered partial differential equation. In \cite{Cao20}, Cao et
al. considered the finite element method for a tempered time fractional advection-dispersion equation.

Most existing numerical methods for the time tempered models assume a smooth solution, while the works on numerical methods dealing with non-smooth solutions are sparse. The time tempered benchmark problem shows that the solution has a weak singularity near the initial time, which motivates us to develop numerical methods to handle non-smooth solutions. To observe the behaviour of the tempered operator, we adapt the tempered operator to the Bloch equation and the two-layered problem. The main contributions of the
present work are as follows:
\begin{itemize}
\item[$\bullet $] Beginning with a tempered benchmark problem, the exact expression of the solution is derived, which incorporates the Mittag-Leffler function and an exponentially tempered factor. When $t\rightarrow 0$, the solution shows weak singularity near the initial time, for which most existing high-order numerical schemes would fail. Based on the regularity of the solution to the benchmark problem, a modified L1 scheme on graded mesh and the WSGL formula with correction terms are developed, for which a systematical comparison in terms of the convergence and consumed CPU time is presented. It finds that for the L1 scheme on graded mesh, changing the regularity index $\alpha $ does not affect the total consumed CPU time too much, which facilitates the implementation of the fast algorithm. The WSGL scheme with correction terms shows high accuracy and needs more correction terms when $\alpha $ is small, which increases the consumed CPU time and leads to an  ill-conditioned matrix. A fast calculation for the time tempered Caputo fractional derivative is developed based on a sum-of-exponentials approximation, in which the kernel function $t^{-\beta }$ is approximated under desired precision $\epsilon $ using three different quadratures. Numerical results demonstrate that this fast method can reduce the running time significantly.

\item[$\bullet$] For the tempered Bloch equation, the analytical solution is presented using Laplace transform. The numerical solution is also deduced, by which the effects of the fractional index $\alpha$ and tempered parameter $\rho$ are analysed in detail. The fractional index delays the `longitudinal' relaxation and promotes the `transversal' relaxation. In comparison, the tempered parameter accelerates the `longitudinal' relaxation and alters the asymptotic value and further advances the `transversal' relaxation. In addition, the classical monoexponential model, the time-fractional model and the tempered time-fractional model are compared to fit the MRI signal data in the human brain, of which the tempered time-fractional model performs the best.

\item[$\bullet $] For the tempered diffusion problem, the associated mean squared displacement has one more factor $e^{-\rho t}$ compared to that of the pure time-fractional diffusion problem. The analytical solution using finite Fourier transform and numerical solution with stability and convergence analysis are applied to solve the problem, which is also extended to the two-layered problem composing two different materials. An important finding is that, compared with the fractional index, the tempered parameter could further accelerate the diffusion. The tempered model with two parameters $\alpha $ and $\rho $ are more flexible, which can avoid choosing a too small fractional index leading to low regularity and strong heterogeneity.
\end{itemize}

The structure of this paper is as follows. In Section \ref{Sec2}, we consider a tempered benchmark problem, in which two classical numerical schemes are applied and the fast calculation for the time tempered Caputo fractional derivative is developed. In Section \ref{Sec3}, the tempered Bloch equation is proposed, which is solved analytically and numerically. In Section \ref{Sec4}, the tempered diffusion problem is presented, for which the analytical and numerical solutions are established. In Section \ref{Sec5}, a tempered two-layered problem is introduced. Semi-analytical solution and numerical solution are derived. Finally, some  conclusions are drawn in Section \ref{Sec6}.

\section{Benchmark problems} \label{Sec2}

In this work, we denote $C$ as a general constant independent of the grid size and may take distinct values in different contexts. Firstly, we start with the following benchmark problem:
\begin{align} \label{eq2.1}
{_0^CD_t^{(\alpha,\rho)}}u(t)=-k_0 u(t),~0<t\leq T,\quad u(0)=u_0,\quad 0<\alpha<1,\quad \rho\geq 0,
\end{align}
where $k_{0}$ and $u_{0}$ are some constants. If $\rho =0$, Problem (\ref{eq2.1}) reduces to the fractional benchmark problem \cite{Feng21}.

\subsection{Analytical solution} \label{Sec2.1}

Define the Laplace transform $\mathcal{L}\left\{ f(t)\right\} =\bar{f}(s)=\int_{0}^{\infty }e^{-st}f(t)dt$ and the inverse Laplace transform $f(t)=\mathcal{L}^{-1}\left\{ \bar{f}(s)\right\} =\frac{1}{2\pi i}\int_{c-i\infty}^{c+i\infty }e^{st}\bar{f}(s)ds$. Applying the Laplace transform to (\ref{eq2.1}) using the formula $\mathcal{L}\left\{ {_{a}^{C}D_{t}^{(\alpha ,\rho)}}u(t)\right\} =(s+\rho )^{\alpha }\bar{u}(s)-(s+\rho )^{\alpha -1}u(0)$, we have
\begin{equation*}
(s+\rho )^{\alpha }\bar{u}(s)-(s+\rho )^{\alpha -1}u(0)=-k_{0}\bar{u}(s),
\end{equation*}
which leads to
\begin{equation*}
\bar{u}(s)=\frac{(s+\rho )^{\alpha -1}}{(s+\rho )^{\alpha }+k_{0}}u_{0}.
\end{equation*}
Utilising the property $\mathcal{L}\left\{ e^{\rho t}f(t)\right\} =\bar{f}(s-\rho )$ and $\mathcal{L}\left\{ E_{\alpha }(-k_{0}t^{\alpha })\right\} =\frac{s^{\alpha -1}}{s^{\alpha }+k_{0}}$, we can derive
\begin{equation*}
u(t)=u_{0}e^{-\rho t}E_{\alpha }(-k_{0}t^{\alpha }),
\end{equation*}
where $E_{\alpha }(z)=\sum_{n=0}^{\infty }\frac{z^{n}}{\Gamma (n\alpha +1)}$ is the Mittag-Leffler function. When $t\rightarrow 0$, using the Taylor series $e^{-\rho t}=\sum_{j=0}^{\infty }\frac{(-\rho t)^{j}}{j!}$, we obtain
\begin{align} \label{eq2.5}
u(t)=u_0e^{-\rho t}E_{\alpha}(-k_0 t^\alpha)=u_0\sum_{j=0}^{\infty}\frac{(-\rho t)^j}{j!}\sum_{n=0}^{\infty}\frac{(-k_0t^\alpha)^n}{\Gamma(n\alpha+1)}=u_0\sum_{j=0}^{\infty}\sum_{n=0}^{\infty}C_{nj}t^{j+n\alpha}.
\end{align}
We can see that, similar to the fractional benchmark problem, the solution shows a weak singularity near the initial time $t=0$ since $\frac{du(t)}{dt}$ blows up as $t\rightarrow 0$. We will extend two classical methods to deal with the weak singularity problem: the L1 scheme on graded mesh and the WSGL scheme with correction terms.

\subsection{L1 scheme on graded mesh} \label{Sec2.2}

Let $N$ be a positive integer. Denote $t_{n}=T\left( \frac{n}{N}\right) ^{r}$, $r\geq 1$, $n=0,1,2,\ldots ,N$, $\tau _{n}=t_{n}-t_{n-1}$, $n=1,2,\ldots ,N$. When $r=1$, the mesh recovers the uniform mesh. Recall the L1 formula on
graded mesh \cite{Stynes17} to approximate the Caputo derivative at $t=t_{n}$:
\begin{equation*}
{_{0}^{C}D_{t}^{\alpha }}u(t_{n})=\frac{1}{\Gamma (2-\alpha )}\sum_{k=0}^{n-1}\frac{u(t_{k+1})-u(t_{k})}{\tau _{k+1}}\left[
(t_{n}-t_{k})^{1-\alpha }-(t_{n}-t_{k+1})^{1-\alpha }\right] +R_{1}^{n}.
\end{equation*}
According to the definition of the Caputo tempered fractional derivative, it is straightforward to derive
\begin{align*}
& {_{0}^{C}D_{t}^{(\alpha ,\rho )}}u(t_{n})=e^{-\rho t_{n}}{_{0}^{C}D_{t}^{\alpha }}(e^{\rho t_{n}}u(t_{n}))=\frac{\tau _{n}^{-\alpha }}{\Gamma (2-\alpha )}u(t_{n})-\frac{\tau _{n}^{-\alpha }}{\Gamma (2-\alpha )}
e^{-\rho \tau _{n}}u(t_{n-1})   \\
+& \frac{1}{\Gamma (2-\alpha )}\sum_{k=0}^{n-2}\frac{e^{-\rho (t_{n}-t_{k+1})}u(t_{k+1})-e^{-\rho (t_{n}-t_{k})}u(t_{k})}{\tau _{k+1}}\left[ (t_{n}-t_{k})^{1-\alpha }-(t_{n}-t_{k+1})^{1-\alpha }\right]
+R_{1}^{n}  \notag \\
:=& {_{0}^{C}\mathbb{D}_{t}^{(\alpha ,\rho )}}u(t_{n})+R_{1}^{n}.
\end{align*}
For the bound of the error term $R_{1}^{n}$, we have the following lemma.

\begin{lem}\label{lem2.1}
Suppose $|\frac{du}{dt}|\leq C(1+t^{\alpha -l})$ for $l=0,1,2$ and $0<\alpha <1$, then the truncation error of the L1 scheme satisfies
\begin{align} \label{eq2.8}
|R_1^n|=\left|{_0^CD_t^{(\alpha,\rho)}}u(t_n)- {_0^C\mathbb{D}_t^{(\alpha,\rho)}}u(t_n)\right|\leq C n^{-\min\{2-\alpha,r\alpha\}}.
\end{align}
\end{lem}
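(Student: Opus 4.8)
The plan is to reduce the tempered error $R_1^n$ to the well-understood error of the standard L1 scheme applied to the auxiliary function $v(t):=e^{\rho t}u(t)$. Since ${_{0}^{C}D_{t}^{(\alpha,\rho)}}u(t_n)=e^{-\rho t_n}\,{_{0}^{C}D_{t}^{\alpha}}v(t_n)$ and, by construction, ${_{0}^{C}\mathbb{D}_{t}^{(\alpha,\rho)}}u(t_n)=e^{-\rho t_n}$ times the standard L1 approximation of ${_{0}^{C}D_{t}^{\alpha}}v(t_n)$ on the graded mesh, we get $|R_1^n|=e^{-\rho t_n}\,|{_{0}^{C}D_{t}^{\alpha}}v(t_n)-{_{0}^{C}\mathbb{D}_{t}^{\alpha}}v(t_n)|\le |{_{0}^{C}D_{t}^{\alpha}}v(t_n)-{_{0}^{C}\mathbb{D}_{t}^{\alpha}}v(t_n)|$ because $e^{-\rho t_n}\le 1$ for $\rho\ge 0$, $t_n\ge 0$. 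So it suffices to bound the classical L1 truncation error for $v$.

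Next I would verify that $v$ satisfies the same regularity hypothesis as $u$, namely $|v'(t)|\le C(1+t^{\alpha-l})$ for $l=0,1,2$ on $[0,T]$. This follows from $v'(t)=e^{\rho t}(u'(t)+\rho u(t))$: on the bounded interval $[0,T]$ the factor $e^{\rho t}$ and $\rho\,e^{\rho t}$ are bounded by constants, $u'$ inherits the bound by assumption, and $u$ itself is bounded (indeed $u$ is continuous on $[0,T]$, or one integrates the bound on $u'$), so the worst singular behaviour $t^{\alpha-l}$ near $t=0$ is preserved. One may also note that higher derivatives of $v$ of the form needed for the standard analysis, e.g. $|v''(t)|\le C(1+t^{\alpha-2})$, follow the same way from the corresponding bounds on $u$; in the Stynes-type analysis these pointwise derivative bounds are exactly the hypotheses used.

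Then I would invoke the known graded-mesh L1 truncation error estimate (as in \cite{Stynes17}): under the stated derivative bounds, the standard L1 scheme on the mesh $t_n=T(n/N)^r$ satisfies $|{_{0}^{C}D_{t}^{\alpha}}v(t_n)-{_{0}^{C}\mathbb{D}_{t}^{\alpha}}v(t_n)|\le C\,n^{-\min\{2-\alpha,\,r\alpha\}}$ for $1\le n\le N$. Combining with the first paragraph gives \eqref{eq2.8} immediately. For completeness, if one does not wish to cite this verbatim, the core of the argument splits the error as a sum over subintervals: on the first interval $[0,t_1]$ one uses the crude bound $\int_0^{t_1}|v'(s)|(t_n-s)^{-\alpha}\,ds$ together with $t_1=T N^{-r}$ and the singular bound on $v'$, producing the $n^{-r\alpha}$ term; on the remaining intervals $[t_k,t_{k+1}]$, $k\ge 1$, one Taylor-expands $v$ and uses $|v''|\le C(1+s^{\alpha-2})$ plus the mesh spacing estimates $\tau_{k+1}\le C\,r\,N^{-r}t_{k+1}^{1-1/r}$ and $\tau_{k+1}/t_k\le C/k$ to control the quadrature error, yielding the $n^{-(2-\alpha)}$ term after summation. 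I expect the genuinely delicate step to be this last mesh-summation bookkeeping on the interior intervals — tracking how the graded spacing interacts with the weak singularity of $v''$ near $t=0$ so that the two regimes $2-\alpha$ and $r\alpha$ emerge cleanly — but since this is precisely what \cite{Stynes17} establishes, the honest shortcut is the reduction in the first two paragraphs followed by a citation.
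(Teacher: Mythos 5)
Your proof is correct and follows essentially the same route as the paper: both reduce the tempered truncation error to the standard L1 error for the auxiliary function $F(t)=e^{\rho t}u(t)$ (using that the discrete operator is exactly $e^{-\rho t_n}$ times the L1 approximation of ${_{0}^{C}D_{t}^{\alpha}}F(t_n)$ and that $F$ inherits the weak-singularity bounds of $u$), and then lean on the graded-mesh estimates of Stynes et al.\ for the interval-by-interval bookkeeping. Your wholesale reduction is a slightly cleaner packaging of the paper's term-by-term decomposition into the $T_{n,k}$, which treats the first and last subintervals explicitly and cites the same reference for the remaining ranges.
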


\begin{proof}
Using the definition of the derivative and integration by parts, we have
\begin{align*}
&{_0^CD_t^{(\alpha,\rho)}}u(t_n)- {_0^C\mathbb{D}_t^{(\alpha,\rho)}}u(t_n)=\frac{e^{-\rho t_n}}{\Gamma(1-\alpha)}\sum_{k=0}^{n-1}\int_{t_k}^{t_{k+1}}\frac{ \left(e^{\rho s}[u(s)-u_h(s)]\right)'}{(t_n-s)^{\alpha}}ds\\
=&\frac{-\alpha e^{-\rho t_n}}{\Gamma(1-\alpha)}\sum_{k=0}^{n-1}\int_{t_k}^{t_{k+1}}(t_n-s)^{-(\alpha+1)} e^{\rho s}[u(s)-u_h(s)]ds=\sum_{k=0}^{n-1}T_{n,k},
\end{align*}
where $u_h(t)$ is a linear interpolation function to approximate $u(t)$ in $[t_k,t_{k+1}]$ and
\begin{align*}
T_{n,k}=\frac{-\alpha e^{-\rho t_n}}{\Gamma(1-\alpha)}\int_{t_k}^{t_{k+1}}(t_n-s)^{-(\alpha+1)} e^{\rho s}[u(s)-u_h(s)]ds.
\end{align*}
As $u(s)-u_h(s)=\frac{1}{2}u''(\xi_k)(s-t_{k-1})(t_k-s)$, $\xi_k\in[t_{k-1},t_k]$, we have
\begin{align*}
|T_{n,k}|\leq C\tau_{k+1}^2\max\limits_{t\in[t_k,t_{k+1}]}|u''(t)| \int_{t_k}^{t_{k+1}}(t_n-s)^{-(\alpha+1)}ds.
\end{align*}
Similar to the proof in Lemma 5.2 in \cite{Stynes17}, when $1\leq k\leq n-1$, we have
\begin{equation*}
\sum_{k=1}^{\lceil n/2\rceil-1}|T_{n,k}|\leq
\left\{\begin{array}{ll}
Cn^{-r(\alpha+1)},& {\rm{if}}~ r(\alpha+1)<2,\\
Cn^{-2}\ln n,& {\rm{if}}~ r(\alpha+1)=2,\\
Cn^{-2},& {\rm{if}}~ r(\alpha+1)>2,
\end{array}\right.
\quad
\text{and}\quad \sum_{k=\lceil n/2\rceil}^{n-2}|T_{n,k}|\leq Cn^{-(2-\alpha)}.
\end{equation*}
Now we consider the bound of $T_{n,0}$. If $n=1$, then
\begin{align*}
T_{1,0}=\frac{\tau_1^{-\alpha}}{\Gamma(2-\alpha)}[F(t_1)-F(t_0)]
-\frac{1}{\Gamma(1-\alpha)}\int_0^{t_1}(t_1-s)^{-\alpha}F'(s)ds,
\end{align*}
where $F(t)=e^{\rho t}u(t)$. As $F'(t)=e^{\rho t}(\rho u(t)+u'(t))$, then $|F'(t)|\leq Ct^{\alpha-1}$, $t\in(0,t_1)$. Similar to the proof in \cite{Stynes17}, we can obtain $|T_{1,0}|\leq C$. For $n>1$, we have
\begin{align*}
&|T_{n,0}|=\left|\frac{\alpha e^{-\rho t_n}}{\Gamma(1-\alpha)}\int_{t_0}^{t_{1}}(t_n-s)^{-(\alpha+1)} e^{\rho s}[u(s)-u_h(s)]ds\right|\leq C\int_{t_0}^{t_{1}}(t_n-s)^{-(\alpha+1)}|u(s)-u_h(s)| ds\\
=&C\int_{t_0}^{t_{1}}(t_n-s)^{-(\alpha+1)}\left|\int_0^s(u(\theta)-u_h(\theta))'d\theta\right|ds\leq C\int_{t_0}^{t_{1}}(t_n-s)^{-(\alpha+1)}\left(\int_0^s|u'(\theta)|d\theta+
\int_0^st_1^{-1}\int_0^{t_1}|u'(\eta)|d\eta d\theta\right)ds\\
\leq &C\int_{t_0}^{t_{1}}(t_n-s)^{-(\alpha+1)}(s^\alpha+st_1^{\alpha-1})ds\leq Ct_1^{\alpha}\int_{t_0}^{t_{1}}(t_n-s)^{-(\alpha+1)}ds\leq C \left(\frac{t_n-t_1}{t_1} \right)^{-\alpha}\leq Cn^{-r\alpha}.
\end{align*}
Finally, we consider the term $T_{n,n-1}$
\begin{align*}
|T_{n,n-1}|&\leq C\tau_{n}^2\max\limits_{t\in[t_{n-1},t_{n}]}|u''(t)| \int_{t_{n-1}}^{t_{n}}(t_n-s)^{-(\alpha+1)}ds\leq C\tau_{n}^2 t_{n-1}^{\alpha-2}\tau_{n}^{-\alpha}\leq  C\tau_{n}^{2-\alpha} t_{n-1}^{\alpha-2}
\leq Cn^{-(2-\alpha)}.
\end{align*}
Combining these error bounds, we can derive (\ref{eq2.8}).
\end{proof}
Denote $u^{n}$ as the numerical approximation to $u(t_{n})$. We can derive the numerical scheme of (\ref{eq2.1}) as
\begin{equation*}
{_{0}^{C}\mathbb{D}_{t}^{(\alpha ,\rho )}}u^{n}=-k_{0}u^{n}.
\end{equation*}
We give an example to illustrate the effectiveness of the numerical scheme by choosing $k_{0}=2$, $\rho =0.5$, $u_{0}=1$ and $T=1$. The error and convergence order of the scheme with varying $N$ and $\alpha $ are presented in Table \ref{tab1}. It can be seen that the L1 scheme fails for a uniform mesh ($r=1$) while the highest possible convergence order $2-\alpha $ is obtained for the optimal graded mesh $r=\frac{2-\alpha }{\alpha }$, which means the L1 scheme is effective as well to approximate the time tempered
operator. As suggested by \cite{Stynes17}, it is better to choose $r=\frac{2(2-\alpha )}{\alpha }$ when the fractional index $\alpha $ is unknown.

\begin{table}[H] 
\caption{The error and convergence order of the L1 scheme on graded mesh for the tempered ODE (\ref{eq2.1}) at $t=1$, where the parameters are $k_0=2$, $\rho=0.5$, $u_0=1$.}
\label{tab1}
\centering
\begin{tabular}{c|  c c| c c|cc}
\toprule
\multirow{2}{*}{\centering $N$ ($\alpha=0.8$)}
 &\multicolumn{2}{c|}{$r=1$} &\multicolumn{2}{c|}{$ r=\frac{2-\alpha}{\alpha}$} &\multicolumn{2}{c}{$ r=\frac{2(2-\alpha)}{\alpha}$}\\
\cmidrule{2-7}
& Error & Order&  Error   &Order&  Error   &Order  \\
\midrule
$160$  & 6.0205E-03     &        &  1.5928E-03      &        & 9.5021E-04      &       \\
$320$  & 3.4550E-03     &  0.80  &  7.3284E-04      &  1.12  & 4.1541E-04      & 1.19  \\
$640$  & 1.9798E-03     &  0.80  &  3.3371E-04      &  1.13  & 1.8123E-04      & 1.20  \\
$1280$ & 1.1365E-03     &  0.80  &  1.5075E-04      &  1.15  & 7.8981E-05      & 1.20  \\
$2560$ & 6.5228E-04     &  0.80  &  6.7666E-05      &  1.16  & 3.4401E-05      & 1.20  \\
$5120$ & 3.7444E-04     &  0.80  &  3.0218E-05      &  1.16  & 1.4979E-05      & 1.20  \\
\midrule
 \multirow{2}{*}{\centering $N$ ($\alpha=0.4$)}
 &\multicolumn{2}{c|}{$r=1$} &\multicolumn{2}{c|}{$ r=\frac{2-\alpha}{\alpha}$} &\multicolumn{2}{c}{$ r=\frac{2(2-\alpha)}{\alpha}$}\\
\cmidrule{2-7}
& Error  &  Order  &  Error   &  Order  &  Error   &  Order  \\
\midrule
$160$  & 4.5385E-02     &        &  3.4393E-04      &        & 2.3495E-04      &       \\
$320$  & 3.6943E-02     &  0.30  &  1.1842E-04      &  1.54  & 7.9816E-05      & 1.56  \\
$640$  & 2.9574E-02     &  0.32  &  4.0418E-05      &  1.55  & 2.6902E-05      & 1.57  \\
$1280$ & 2.3372E-02     &  0.34  &  1.3712E-05      &  1.56  & 8.9942E-06      & 1.58  \\
$2560$ & 1.8287E-02     &  0.35  &  4.6283E-06      &  1.57  & 2.9968E-06      & 1.59  \\
$5120$ & 1.4201E-02     &  0.36  &  1.5557E-06      &  1.57  & 9.8013E-07      & 1.61  \\
\bottomrule
\end{tabular}
\end{table}

\subsection{The WSGL formula with correction terms} \label{Sec2.3}

Generally, the analytical solution of the time-fractional differential equation contains the term $t^{\sigma _{m}}$, which exhibits low regularity when ${\sigma _{m}}$ is small. The weighted shifted Gr\"{u}nwald-Letnikov (WSGL) formula with correction terms is a useful method to deal with the solution with low regularity term $t^{\sigma _{m}}$, in which the correction term can be exact or highly accurate to approximate the term $t^{\sigma _{m}}$ \cite{Lubich,Zeng17,Liu2019}. Even when the regularity indices in the
correction terms do not match the singularity of the analytical solution exactly, the accuracy of the WSGL formula can still be improved significantly \cite{Zeng17}. As shown in (\ref{eq2.5}), the solution incorporates a similar term $t^{\sigma _{m}}$, therefore the WSGL formula could be used to handle the problem. Here we use a uniform grid in the temporal direction. Denote $t_{n}=n\tau $, $n=0,1,2,\ldots ,N$, where $\tau =\frac{T}{N}$ is the uniform temporal step and $N\in \mathbb{Z}^+$. Introduce the Riemann-Liouville fractional derivative
\begin{equation*}
{_{0}^{R}D_{t}^{\alpha }}v(t)=\frac{1}{\Gamma (1-\alpha )}\frac{d}{dt}
\int_{0}^{t}\frac{v(\xi )}{(t-\xi )^{\alpha }}d\xi ,\quad 0<\alpha <1.
\end{equation*}
Then the WSGL formula to discretise the Riemann-Liouville time-fractional derivative is
\begin{align} \label{eq2.11}
{_0^RD_t^{\alpha}}u(t_n)=\tau^{-\alpha}\sum_{k=0}^n\omega_{n-k}^{(\alpha)}u(t_k)
+\tau^{-\alpha}\sum_{k=1}^m W_k^{(n,\alpha)}u(t_k),
\end{align}
where $\omega _{0}^{(\alpha )}=\frac{2+\alpha }{2}g_{0}^{(\alpha )}$ $\omega_{k}^{(\alpha )}=\frac{2+\alpha }{2}g_{k}^{(\alpha )}-\frac{\alpha }{2}g_{k-1}^{(\alpha )}$, $g_{k}^{(\alpha )}=(-1)^{k}{\binom{\alpha }{k}}$, and the starting weights $W_{k}^{(n,\alpha )}$ are chosen such that (\ref{eq2.11}) is exact for $v(t)=t^{\sigma _{m}}$ ($m=0,1,2,\ldots $), which leads to
the system:
\begin{align} \label{eq2.12}
\sum_{k=1}^{m}W_k^{(n,\alpha)}k^{\sigma_m}=\frac{\Gamma(\sigma_m+1)}{\Gamma(\sigma_m+1-\alpha)}n^{\sigma_m-\alpha}
-\sum_{k=0}^{n}\omega_{n-k}^{(\alpha)}k^{\sigma_m}.
\end{align}
We apply the formula to the Caputo tempered operator at $t=t_{n}$ to obtain
\begin{align*}
{_{0}^{C}D_{t}^{(\alpha ,\rho )}}u(t_{n})& =e^{-\rho t_{n}}{_{0}^{R}D_{t}^{\alpha }}\left( F(t_{n})-F(t_{0})\right)  \\
& =e^{-\rho t_{n}}\tau ^{-\alpha }\sum_{k=0}^{n}\omega _{n-k}^{(\alpha)}\left( F(t_{k})-F(t_{0})\right) +e^{-\rho t_{n}}\tau ^{-\alpha}\sum_{k=1}^{m}W_{k}^{(n,\alpha )}\left( F(t_{k})-F(t_{0})\right) +R_{2}^{n}\\
& =\tau ^{-\alpha }\sum_{k=0}^{n}\omega _{n-k}^{(\alpha )}e^{-\rho
t_{n-k}}u(t_{k})-e^{-\rho t_{n}}\tau ^{-\alpha }\sum_{k=0}^{n}\omega
_{n-k}^{(\alpha )}u(t_{0}) \\
& +\tau ^{-\alpha }\sum_{k=1}^{m}W_{k}^{(n,\alpha )}e^{-\rho
t_{n-k}}u(t_{k})-e^{-\rho t_{n}}\tau ^{-\alpha
}\sum_{k=1}^{m}W_{k}^{(n,\alpha )}u(t_{0})+R_{2}^{n} \\
& :={_{0}^{C}\mathfrak{D}_{t}^{(\alpha ,\rho )}}u(t_{n})+R_{2}^{n},
\end{align*}
where $F(t)=e^{\rho t}u(t)$ and $|R_{2}^{n}|\leq C\tau ^{2}t_{n}^{\sigma_{m+1}-2-\alpha }$. To guarantee the WSGL formula has a global second-order convergence, we need $\sigma _{m+1}\geq 2+\alpha $. For more details about choosing the correction terms, one can refer to \cite{Zeng17}. For our problem, at $t=t_{n}$, we have
\begin{equation*}
{_{0}^{C}\mathfrak{D}_{t}^{(\alpha ,\rho )}}u^{n}=-k_{0}u^{n}.
\end{equation*}
To verify the WSGL formula, we also give an example using the same parameters $k_{0}=2$, $\rho =0.5$, $u_{0}=1$ and $T=1$, to which the numerical results are given in Table \ref{tab2}. We can observe that without the correction terms ($m=0$), the WSGL formula only exhibits convergence rate $O(\tau ^{\alpha })$. While adding the correction terms into the WSGL formula, the second-order convergence is obtained for the case $\alpha =0.8$ with $m=2$. Although the optimal second-order convergence is not reached for
the case $\alpha =0.4$ with $m=4$, the accuracy has been improved significantly. It reveals that the WSGL formula is also feasible to tackle the time-fractional tempered problem.
\begin{table}[] 
\caption{The error and convergence order of the WSGL scheme for the tempered ODE (\ref{eq2.1}) at $t=1$, where the parameters are $k_0=2$, $\rho=0.5$, $u_0=1$.}
\label{tab2}
\centering
\begin{tabular}{c|  c c| c c}
\toprule
\multirow{2}{*}{\centering $N$ ($\alpha=0.8$)}
 &\multicolumn{2}{c|}{$m=0$} &\multicolumn{2}{c}{$m=2$} \\
\cmidrule{2-5}
& Error & Order&  Error   &Order    \\
\midrule
$160$  & 1.2426E-02     &        &  1.4662E-05      &            \\
$320$  & 7.3015E-03     &  0.77  &  3.8910E-06      &  1.91    \\
$640$  & 4.2476E-03     &  0.78  &  1.0616E-06      &  1.87    \\
$1280$ & 2.4573E-03     &  0.79  &  2.8079E-07      &  1.92    \\
$2560$ & 1.4171E-03     &  0.79  &  7.2789E-08      &  1.95    \\
$5120$ & 8.1580E-04     &  0.80  &  1.8625E-08      &  1.97   \\
\midrule
 \multirow{2}{*}{\centering $N$ ($\alpha=0.4$)}
  &\multicolumn{2}{c|}{$m=0$} &\multicolumn{2}{c}{$m=4$} \\
\cmidrule{2-5}
& Error  &  Order  &  Error   &  Order     \\
\midrule
$160$  & 5.5856E-02     &        &  3.1630E-05      &           \\
$320$  & 4.5653E-02     &  0.29  &  1.0970E-05      &  1.53    \\
$640$  & 3.6675E-02     &  0.32  &  3.5479E-06      &  1.63    \\
$1280$ & 2.9069E-02     &  0.34  &  1.0843E-06      &  1.71    \\
$2560$ & 2.2800E-02     &  0.35  &  3.1673E-07      &  1.78    \\
$5120$ & 1.7739E-02     &  0.36  &  8.9282E-08      &  1.83    \\
\bottomrule
\end{tabular}
\end{table}

\subsection{Comparison of the two methods} \label{Sec2.4}

In this section, we compare the L1 scheme on graded mesh with the WSGL scheme. To avoid the error caused by dsicretising the Mittag-Leffler function, we consider the following example:
\begin{align} \label{eq2.14}
{_0^CD_t^{(\alpha,\rho)}}u(t)=f(t),~ 0<t\leq T, \quad u(0)=u_0,
\end{align}
with an exact solution $u(t)=u_{0}e^{-\rho t}\sum\limits_{k=0}^{8}t^{k\alpha }$. In the calculation, we choose the parameters $\rho =0.5$, $u_{0}=1$ and $T=1$. The related numerical results are shown in Tables \ref{tab3} and \ref{tab4}, respectively. From Table \ref{tab3}, we can see that the optimal convergence order $2-\alpha $ is obtained using mesh grading $r=\frac{2(2-\alpha )}{\alpha }$. An important property is that the CPU time for the cases $\alpha =0.4$ and $\alpha =0.8$ is almost the same, which means the
fractional index $\alpha $ does not affect the CPU time too much for a fixed $N$. This will facilitate the implementation of the fast calculation of the tempered operator in the subsequent section. Compared to the L1 scheme, the WSGL scheme is of high order, which shows better accuracy and convergence. However, when $\alpha $ is small, we need to add more correction terms, which increases the CPU time. In addition, the system (\ref{eq2.12}) leads to an exponential Vandermonde type matrix. When the number of the correction terms is large, the matrix will be ill-conditioned, which may cause big roundoff errors \cite{Zeng17}. We can conclude that both methods have merits and drawbacks.
\begin{table}[] 
\caption{The error and convergence order of the L1 scheme for the example (\ref{eq2.14}) at $t=1$, where the parameters are  $\rho=0.5$, $u_0=1$, $r=\frac{2(2-\alpha)}{\alpha}$.}
\label{tab3}
\centering
\begin{tabular}{c| c c c| cc c}
\toprule
\multirow{3}{*}{\centering $N$ }
 &\multicolumn{3}{c|}{$\alpha=0.4$} &\multicolumn{3}{c}{$\alpha=0.8$} \\
\cmidrule{2-7}
         &  Error         & Order & CPU time(s) & Error            & Order & CPU time(s)   \\
\midrule
$640$    & 1.1327E-03     &       & 0.12        &  1.0984E-02      &       &  0.11  \\
$1280$   & 3.8563E-04     &  1.55 & 0.36        &  4.8006E-03      &  1.19 &  0.36  \\
$2560$   & 1.3022E-04     &  1.57 & 1.27        &  2.0947E-03      &  1.20 &  1.21  \\
$5120$   & 4.3745E-05     &  1.57 & 4.75        &  9.1312E-04      &  1.20 &  4.64  \\
$10240$  & 1.4672E-05     &  1.58 & 18.13       &  3.9780E-04      &  1.20 &  17.87 \\
$20480$  & 4.9366E-06     &  1.57 & 70.42       &  1.7324E-04      &  1.20 &  69.51 \\
\bottomrule
\end{tabular}
\end{table}

\begin{table}[] 
\caption{The error and convergence order of the WSGL scheme for the example (\ref{eq2.14}) at $t=1$, where the parameters are  $\rho=0.5$, $u_0=1$.}
\label{tab4}
\centering
\begin{tabular}{c| c c c| cc c}
\toprule
\multirow{3}{*}{\centering $N$ }
 &\multicolumn{3}{c|}{$\alpha=0.4~(m=4)$} &\multicolumn{3}{c}{$\alpha=0.8~(m=2)$} \\
\cmidrule{2-7}
         &  Error         & Order & CPU time(s) & Error            & Order & CPU time(s)   \\
\midrule
$640$    & 2.5706E-06     &       & 0.18        &  3.6710E-05      &       &  0.12  \\
$1280$   & 6.5015E-07     &  1.98 & 0.41        &  9.1970E-06      &  2.00 &  0.26  \\
$2560$   & 1.6377E-07     &  1.99 & 1.03        &  2.3018E-06      &  2.00 &  0.68  \\
$5120$   & 4.1153E-08     &  1.99 & 3.27        &  5.7579E-07      &  2.00 &  2.06  \\
$10240$  & 1.0325E-08     &  1.99 & 11.24       &  1.4399E-07      &  2.00 &  6.73 \\
$20480$  & 2.5878E-09     &  2.00 & 39.57       &  3.6004E-08      &  2.00 &  24.18 \\
\bottomrule
\end{tabular}
\end{table}

\subsection{Fast calculation for the time tempered Caputo fractional derivative}\label{Sec2.5}

In this part, based on the L1 scheme, we consider a fast evaluation for the time tempered Caputo fractional derivative, in which the derivative is to be splitted into two different parts: the local part and the history part. A fast approximation will be applied to the history part. We now give a detailed implementation. At $t=t_{n}$, we have
\begin{align*}
&{^C_0D_t^{(\alpha,\rho)}}u(t)\Big|_{t=t_n}=e^{-\rho t}\,{^C_0D^{\alpha}_t}(e^{\rho t}u(t)\Big|_{t=t_n}=\frac{e^{-\rho t_n}}{\Gamma(1-\alpha)} \int_0^{t_n}\frac{\left( e^{\rho s}u(s)\right)' }{(t_n-s)^\alpha} ds\nonumber\\ 
=&\frac{e^{-\rho t_n}}{\Gamma(1-\alpha)} \int_0^{t_{n-1}}\frac{\left( e^{\rho s}u(s)\right)' }{(t_n-s)^\alpha} ds
+\frac{e^{-\rho t_n}}{\Gamma(1-\alpha)} \int_{t_{n-1}}^{t_n}\frac{\left( e^{\rho s}u(s)\right)' }{(t_n-s)^\alpha} ds:=C_h(t_n)+C_l(t_n),
\end{align*}
where $C_{h}(t_{n})$ is the history part and $C_{l}(t_{n})$ is the local part. For the local part $C_{l}(t_{n})$, we use the backward difference scheme to approximate the first-order derivative, which gives
\begin{align*}
C_l(t_n)\approx \frac{e^{-\rho t_n}}{\Gamma(1-\alpha)}\frac{e^{\rho t_n}u(t_n)-e^{\rho t_{n-1}}u(t_{n-1})}{\tau_n}
\int_{t_{n-1}}^{t_n}\frac{1 }{(t_n-s)^\alpha} ds=\frac{u(t_n)-e^{-\rho \tau_{n}}u(t_{n-1})}{\tau_n^\alpha\Gamma(2-\alpha)}.
\end{align*}
With regards to the history part $C_{h}(t_{n})$, using integration by parts, we obtain
\begin{align}\label{eq2.17}
C_h(t_n)=\frac{e^{-\rho t_n}}{\Gamma(1-\alpha)} \int_0^{t_{n-1}}\frac{\left( e^{\rho s}u(s)\right)' }{(t_n-s)^\alpha} ds=
\frac{e^{-\rho t_n}}{\Gamma(1-\alpha)} \left[ \frac{e^{\rho t_{n-1}}u(t_{n-1})}{\tau_n^\alpha}-\frac{u(t_{0})}{t_n^\alpha}-\alpha \int_0^{t_{n-1}}\frac{e^{\rho s}u(s) }{(t_n-s)^{1+\alpha}} ds\right].
\end{align}
Next, we approximate the kernel $t^{-\beta }$ $(0<\beta <2)$ via a sum-of-exponentials (SOE) approximation. For any $\beta >0$, the kernel $t^{-\beta }$ can be written in integral form, i.e.,
\begin{align*}
t^{-\beta}=\frac{1}{\Gamma(\beta)}\int_0^{\infty} e^{-ts}s^{\beta-1}ds.
\end{align*}
To approximate this integral $(0<\sigma \leq t)$ with desired precision $\varepsilon $, the integral interval will be split into three parts $[0,\infty )=[0,a]\cup \lbrack a,p]\cup \lbrack p,\infty )$, where $p$ is
chosen such that $p=\mathcal{O}\left( \log \left( \frac{1}{\varepsilon
\sigma }\right) /\sigma \right) $. Then the integral on $[0,a]$ and $[a,p]$
can be approximated using Gauss-Jacobi quadrature and Gauss-Legendre
quadrature, respectively. Therefore, we have the following lemma \cite{Jiang17}.

\begin{lem}\label{lem2.2}
Let $0<\sigma\leq t\leq T$ ($\sigma\leq 1$ and $T\geq 1$), let $\varepsilon>0$ be the desired precision, $n_0=\mathcal{O}\left( \log\frac{1}{\varepsilon} \right)$, $M=\mathcal{O}\left( \log T\right)$, and let $\widetilde{N}=\mathcal{O} \left( \log\log\frac{1}{\varepsilon}+\log\frac{1}{\sigma}\right)$. Furthermore, let $s_{o,1},\ldots,s_{o,n_0}$ and $\omega_{o,1},\ldots,\omega_{o,n_0}$ be the nodes and weights for the $n_0$-point Gauss-Jacobi quadrature on the interval $[0,2^{-M}]$, let $s_{j,1},\ldots,s_{j,n_s}$ and $\omega_{j,1},\ldots,\omega_{j,n_s}$ be the nodes and weights for $n_s$-point Gauss-Legendre quadrature on small intervals $[2^j,2^{j+1}]$, $j=-M,\ldots, -1$, where $n_s=\mathcal{O}\left( \log\frac{1}{\varepsilon} \right)$, and let $s_{j,1},\ldots,s_{j,n_l}$ and $\omega_{j,1},\ldots,\omega_{j,n_l}$ be the nodes and weights for $n_l$-point Gauss-Legendre quadrature on large intervals $[2^j,2^{j+1}]$, $j=0,\ldots, \widetilde{N}$, where $n_l=\mathcal{O}\left( \log\frac{1}{\varepsilon} +\log\frac{1}{\sigma}\right)$, Then for $t\in(\sigma,T]$ and $\beta\in(0,2)$, we have
\begin{align*} 
 \left|t^{-\beta} -\left(\sum_{k=1}^{n_0}e^{-s_{o,k}t}\omega_{o,k}+\sum_{j=-M}^{-1}\sum_{k=1}^{n_s}e^{-s_{j,k}t}s^{\beta-1}_{j,k}\omega_{j,k}
+ \sum_{j=0}^{\widetilde{N}}\sum_{k=1}^{n_l}e^{-s_{j,k}t}s^{\beta-1}_{j,k}\omega_{j,k}\right)\right|\leq \varepsilon.
\end{align*}
\end{lem}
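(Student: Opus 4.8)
The plan is to reduce the claimed bound to three separate quadrature-error estimates, one for each of the three subintervals $[0,2^{-M}]$, $\bigcup_{j=-M}^{-1}[2^{j},2^{j+1}]=[2^{-M},1]$, and $\bigcup_{j=0}^{\widetilde N}[2^{j},2^{j+1}]=[1,2^{\widetilde N+1}]$, plus the tail $[2^{\widetilde N+1},\infty)$ which is simply truncated. Starting from the integral representation $t^{-\beta}=\frac{1}{\Gamma(\beta)}\int_0^\infty e^{-ts}s^{\beta-1}\,ds$, I would write the total error as the sum of (i) the Gauss--Jacobi quadrature error on $[0,2^{-M}]$, where the factor $s^{\beta-1}$ is absorbed into the Jacobi weight so that the remaining integrand $e^{-ts}$ is analytic, (ii) the Gauss--Legendre errors on each dyadic block $[2^j,2^{j+1}]$ for $-M\le j\le -1$, (iii) the Gauss--Legendre errors on each dyadic block for $0\le j\le\widetilde N$, and (iv) the truncation error $\frac{1}{\Gamma(\beta)}\int_{2^{\widetilde N+1}}^\infty e^{-ts}s^{\beta-1}\,ds$. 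The goal is to show each group contributes at most $O(\varepsilon)$ (or $\varepsilon/3$ after rescaling constants, absorbed into the $O(\cdot)$ in the node counts), uniformly for $t\in(\sigma,T]$ and $\beta\in(0,2)$.

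The key estimates are the classical Gauss quadrature remainder bounds via contour integration (Bernstein-ellipse estimates). For a Gauss--Legendre rule with $n$ nodes on an interval $[c,d]$ applied to a function analytic in a Bernstein ellipse of parameter $\varrho>1$, the error decays like $C\varrho^{-2n}$ times the sup-norm of the integrand on the ellipse. Here the integrand on $[2^j,2^{j+1}]$ is $s\mapsto e^{-ts}s^{\beta-1}$; after the affine change of variables to $[-1,1]$ the exponential $e^{-ts}$ contracts whenever $t2^j\gtrsim 1$ and is harmless (bounded by $1$) when $t2^j\lesssim 1$, while the factor $s^{\beta-1}$ is analytic and bounded on a fixed ellipse since $s$ stays away from $0$ on these blocks. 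For the small blocks $j\le -1$ one checks $\sum_{j=-M}^{-1}$ of the per-block errors is geometric and summable, giving $n_s=O(\log\frac1\varepsilon)$ suffices; for the large blocks $j\ge 0$ the exponential damping $e^{-t2^j}\le e^{-\sigma 2^j}$ helps but the geometrically growing block length forces the extra $\log\frac1\sigma$ term in $n_l$, and the number of blocks $\widetilde N=O(\log\log\frac1\varepsilon+\log\frac1\sigma)$ is exactly what makes the truncated tail at $2^{\widetilde N+1}$ smaller than $\varepsilon$, using $\int_P^\infty e^{-\sigma s}s^{\beta-1}\,ds\lesssim e^{-\sigma P}P^{\beta-1}$ for $\sigma P$ large. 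For the innermost interval $[0,2^{-M}]$, the Gauss--Jacobi rule with weight $s^{\beta-1}$ integrates $e^{-ts}$, which is entire, so its $n_0$-point error decays factorially (super-geometrically) in $n_0$ times $(t2^{-M})^{n_0}/(n_0)!\lesssim (T2^{-M})^{n_0}$; choosing $M=O(\log T)$ makes the base $\le 1/2$, hence $n_0=O(\log\frac1\varepsilon)$ is enough.

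I would carry the steps out in this order: first fix the decomposition and the truncation tail bound (quick, uses only $\sigma P\gtrsim \log\frac1{\varepsilon\sigma}$); second, dispose of the Gauss--Jacobi piece on $[0,2^{-M}]$ using the entire-function remainder and the choice $M=O(\log T)$; third, handle the small dyadic blocks $[2^j,2^{j+1}]$, $-M\le j\le -1$, bounding $e^{-ts}\le 1$ and summing the geometric series of Bernstein-ellipse errors; fourth, handle the large dyadic blocks $0\le j\le\widetilde N$, where I keep the exponential factor $e^{-ts}\le e^{-\sigma 2^j}$ and balance it against the block length $2^j$ to see that $n_l=O(\log\frac1\varepsilon+\log\frac1\sigma)$ controls every block uniformly, then sum over the $\widetilde N+1$ blocks. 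Finally I would collect the four contributions and absorb the accumulated constants into the asymptotic node counts.

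The main obstacle I expect is the uniformity in $\beta\in(0,2)$ and the careful bookkeeping on the large blocks: the factor $s^{\beta-1}$ has a singularity at $s=0$, which is why the innermost interval needs the Jacobi weight rather than Legendre, and on the large blocks one must verify that the Bernstein ellipse on which $s\mapsto s^{\beta-1}e^{-ts}$ is estimated can be taken of a fixed size (independent of $j$) after rescaling — this works because on $[2^j,2^{j+1}]$ the point $s=0$ sits a fixed relative distance from the block, but the exponential $e^{-ts}$ on the ellipse can grow like $e^{t2^j\cdot(\text{ellipse radius})}$, so one must choose the ellipse radius shrinking mildly with $t2^j$, and it is this trade-off that produces the $\log\frac1\sigma$ correction in $n_l$ and $\widetilde N$. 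Since this is precisely the argument of \cite{Jiang17}, I would follow that reference for the delicate ellipse-radius optimisation and simply verify that nothing changes when $\beta$ ranges over $(0,2)$ rather than a single fixed value.
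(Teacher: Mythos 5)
Your proposal is correct and follows essentially the same route as the paper, which itself only sketches the argument (splitting $[0,\infty)$ into $[0,2^{-M}]$, dyadic Gauss--Legendre panels up to $2^{\widetilde N+1}$, and a truncated tail) and defers the details to \cite{Jiang17}; your decomposition, the use of Gauss--Jacobi to absorb the $s^{\beta-1}$ singularity on the innermost interval, and the Bernstein-ellipse error bounds on the dyadic blocks are exactly the ingredients of that reference. The only point deserving explicit verification, which you correctly flag, is that the estimates remain uniform as $\beta$ ranges over $(0,2)$ rather than the interval $(1,2)$ treated in \cite{Jiang17}.
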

From the lemma, we can conclude that for $t\in[\sigma,T]$, $\sigma=\min\limits_{1\leq n\leq N}\{\tau_n \}$, there exist some positive real numbers $s_i$ and $\omega_i$ $(i=1,2,\ldots,N_{\rm{exp}})$ such that
\begin{align*} 
 \left|t^{-\beta}- \sum_{i=1}^{N_{\rm{exp}}}\omega_ie^{-s_{i}t}\right|\leq \varepsilon,\quad t\in[\sigma,T],\quad \beta\in(0,2),
\end{align*}
where
\begin{align*}
N_{\rm{exp}}=\mathcal{O}\left( \log\frac{1}{\varepsilon}\left(\log\log\frac{1}{\varepsilon}+\log\frac{T}{\sigma}\right)
+\log\frac{1}{\sigma}\left(\log\log\frac{1}{\varepsilon}+\log\frac{1}{\sigma}\right)\right).
\end{align*}
Replacing the kernel $t^{-1-\alpha}$ in the history part (\ref{eq2.17}) with the SOE approximation, we obtain
\begin{align*} 
\alpha \int_0^{t_{n-1}}\frac{e^{\rho s}u(s) }{(t_n-s)^{1+\alpha}} ds=\alpha \int_0^{t_{n-1}} \sum_{i=1}^{N_{exp}}\omega_ie^{-s_{i}(t_n-s)}e^{\rho s}u(s)  ds=\alpha\sum_{i=1}^{N_{exp}}\omega_i e^{\rho t_n}\mathcal{U}_{his,i}(t_n),
\end{align*}
where $\mathcal{U}_{his,i}(t_n)=\int_0^{t_{n-1}}e^{-(\rho+s_i)(t_n-s)}u(s) ds$. For the term $\mathcal{U}_{his,i}(t_n)$, we have the following recurrence relation
\begin{align*}
\mathcal{U}_{his,i}(t_n)=e^{-(\rho+s_i)\tau_n}\mathcal{U}_{his,i}(t_{n-1})+\int_{t_{n-2}}^{t_{n-1}}e^{-(\rho+s_i)(t_n-s)}u(s)  ds.
\end{align*}
Using a linear interpolation function to approximate $u(t)$ on the interval $[t_{n-2},t_{n-1}]$, we can derive
\begin{align*}
\int_{t_{n-2}}^{t_{n-1}}e^{-(\rho+s_i)(t_n-s)}u(s) ds
\approx& \frac{e^{-(\rho+s_i)\tau_n}}{(\rho+s_i)^2\tau_{n-1}}
\bigg[ (e^{-(\rho+s_i)\tau_{n-1}}-1+(\rho+s_i)\tau_{n-1})u(t_{n-1})\\
+&(1-e^{-(\rho+s_i)\tau_{n-1}}-e^{-(\rho+s_i)\tau_{n-1}}(\rho+s_i)\tau_{n-1})u(t_{n-2}) \bigg].
\end{align*}
Finally, the fast approximation of ${^C_0D_t^{(\alpha,\rho)}}u(t_n)$, $n\geq 2$ can be written as
\begin{align}
{^{FC}_0\mathbb{D}_t^{(\alpha,\rho)}}u(t_n)&:=\frac{u(t_n)-e^{-\rho \tau_{n}}u(t_{n-1})}{\tau_n^\alpha\Gamma(2-\alpha)}+
\frac{e^{-\rho t_n}}{\Gamma(1-\alpha)} \left[ \frac{e^{\rho t_{n-1}}u(t_{n-1})}{\tau_n^\alpha}-\frac{u(t_{0})}{t_n^\alpha}-\alpha\sum_{i=1}^{N_{exp}}\omega_i e^{\rho t_n}\mathcal{U}_{his,i}(t_n)\right] \nonumber\\\label{eq2.22}
&=\frac{u(t_n)}{\tau_n^\alpha\Gamma(2-\alpha)}-\frac{\alpha e^{-\rho \tau_{n}}u(t_{n-1})}{\tau_n^\alpha\Gamma(2-\alpha)}
-\frac{e^{-\rho t_{n}}u(t_{0})}{\Gamma(1-\alpha)t_n^\alpha}-\frac{\alpha}{\Gamma(1-\alpha)}\sum_{i=1}^{N_{exp}}\omega_i \mathcal{U}_{his,i}(t_n).
\end{align}
When $n=1$, we have
\begin{align} \label{eq2.23}
{^{FC}_0\mathbb{D}_t^{(\alpha,\rho)}}u(t_{1}):=\frac{u(t_{1})-e^{-\rho \tau_{1}}u(t_{0})}{\tau_1^\alpha\Gamma(2-\alpha)}.
\end{align}
For the truncation error of the fast algorithm, similar to the proof in \cite{Cao20,Jiang17}, it is straightforward to conclude the following lemma.
\begin{lem}\label{lem2.3}
Suppose $|\frac{du}{dt}|\leq C(1+t^{\alpha-l})$ for $l=0,1,2$ and $0<\alpha<1$ and let $\varepsilon $ be the desired precision, then we have the truncation error of the fast schemes (\ref{eq2.22})-(\ref{eq2.23})
\begin{align*} 
|R_3^n|=\left|{_0^CD_t^{(\alpha,\rho)}}u(t_n)- {^{FC}_0\mathbb{D}_t^{(\alpha,\rho)}}u(t_n)\right|\leq C \left( n^{-\min\{2-\alpha,r\alpha\}}+\varepsilon\right).
\end{align*}
\end{lem}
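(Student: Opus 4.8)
The plan is to decompose the total error $R_3^n$ into the L1 truncation error and the error introduced by the sum-of-exponentials approximation of the kernel, and then bound each piece separately. Concretely, I would insert the exact history integral (before any SOE approximation) as an intermediate quantity: write ${^{FC}_0\mathbb{D}_t^{(\alpha,\rho)}}u(t_n)$ against the scheme ${^C_0\mathbb{D}_t^{(\alpha,\rho)}}u(t_n)$ obtained by discretising the local part with a backward difference and the history integral $\alpha\int_0^{t_{n-1}} e^{\rho s}u(s)(t_n-s)^{-1-\alpha}\,ds$ by piecewise-linear interpolation of $u$ on each subinterval (with the exact exponential kernel). By the triangle inequality,
\begin{align*}
|R_3^n| \leq \left|{_0^CD_t^{(\alpha,\rho)}}u(t_n)- {^C_0\mathbb{D}_t^{(\alpha,\rho)}}u(t_n)\right| + \left|{^C_0\mathbb{D}_t^{(\alpha,\rho)}}u(t_n)- {^{FC}_0\mathbb{D}_t^{(\alpha,\rho)}}u(t_n)\right|.
\end{align*}
The first term is exactly the L1 truncation error bounded in Lemma \ref{lem2.1}, namely $\leq C n^{-\min\{2-\alpha,r\alpha\}}$ (the only nuance is that here the history integral is discretised on two-point stencils $[t_{k-1},t_k]$ rather than $[t_k,t_{k+1}]$, but the interpolation error estimate $u(s)-u_h(s)=\tfrac12 u''(\xi)(s-t_{k-1})(s-t_k)$ and the summation argument of Lemma 5.2 in \cite{Stynes17} go through verbatim under the hypothesis $|du/dt|\leq C(1+t^{\alpha-l})$).

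Second, I would bound the SOE perturbation term. The difference between ${^C_0\mathbb{D}_t^{(\alpha,\rho)}}$ and ${^{FC}_0\mathbb{D}_t^{(\alpha,\rho)}}$ comes solely from replacing $(t_n-s)^{-1-\alpha}$ by $\sum_{i=1}^{N_{\rm exp}}\omega_i e^{-s_i(t_n-s)}$ inside the history integral over $[0,t_{n-1}]$. For $s\in[0,t_{n-1}]$ we have $t_n-s\geq \tau_n \geq \sigma$, so Lemma \ref{lem2.2} gives a pointwise bound $\varepsilon$ on the kernel error; hence the perturbation is at most
\begin{align*}
\frac{\alpha e^{-\rho t_n}}{\Gamma(1-\alpha)}\int_0^{t_{n-1}} \varepsilon\, e^{\rho s}|u(s)|\,ds \leq C\varepsilon,
\end{align*}
using boundedness of $u$ on $[0,T]$ (which follows from $|du/dt|\leq C(1+t^{\alpha-l})$ with $l=0$, integrable near $0$) and $e^{-\rho t_n}\int_0^{t_{n-1}} e^{\rho s}ds \leq 1/\rho$ when $\rho>0$, or $\leq t_n\leq T$ when $\rho=0$. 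One should also note that the piecewise-linear quadrature of $\int_{t_{n-2}}^{t_{n-1}} e^{-(\rho+s_i)(t_n-s)}u(s)\,ds$ used to build the recurrence for $\mathcal{U}_{his,i}$ introduces no additional error beyond what is already counted in the first term, since that same piecewise-linear interpolation is what defines ${^C_0\mathbb{D}_t^{(\alpha,\rho)}}$; and the $n=1$ case (\ref{eq2.23}) is just the bare local part $C_l(t_1)$, whose error is the $T_{1,0}$ estimate already handled in Lemma \ref{lem2.1}.

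Combining the two bounds yields $|R_3^n|\leq C(n^{-\min\{2-\alpha,r\alpha\}}+\varepsilon)$, as claimed. The main obstacle — though it is more bookkeeping than genuine difficulty — is organising the error splitting so that the piecewise-linear interpolation of $u$ is accounted for exactly once: it appears both in the ``exact-kernel'' discretisation error (the L1 part) and implicitly in the closed-form recurrence for $\mathcal{U}_{his,i}$, and one must be careful that introducing the SOE approximation and then evaluating the resulting exponential integrals analytically does not create a third, uncontrolled source of error. Once the intermediate scheme ${^C_0\mathbb{D}_t^{(\alpha,\rho)}}$ is defined precisely as ``backward difference on the local part $+$ exact-exponential-kernel SOE form with piecewise-linear $u$ on the history part,'' both inequalities are routine and the proof is a short reference to Lemmas \ref{lem2.1} and \ref{lem2.2}, exactly as indicated by the citation to \cite{Cao20,Jiang17}.
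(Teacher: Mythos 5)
Your proposal is correct and is essentially the argument the paper intends: the paper omits the proof entirely, stating only that it follows ``similar to the proof in \cite{Cao20,Jiang17}'', and the splitting you describe --- L1/interpolation error bounded as in Lemma \ref{lem2.1} plus a pointwise SOE kernel perturbation of size $\varepsilon$ controlled via Lemma \ref{lem2.2} (valid since $t_n-s\geq\tau_n\geq\sigma$ on the history interval) and the boundedness of $u$ --- is precisely the argument of those references. Your attention to the bookkeeping issue (interpolating $e^{\rho s}u(s)$ in the L1 part versus interpolating $u$ against the exact exponential in the recurrence for $\mathcal{U}_{his,i}$) is a genuine subtlety the paper glosses over, and you resolve it correctly.
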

In the subsequent sections, we adapt the tempered operator into different models to observe the behaviour of the tempered models.


\section{Application I: Tempered Bloch equations}\label{Sec3}

Ordinary matter all has nuclei with induced magnetic moments, leading to a nuclear paramagnetic polarisation in a constant magnetic field when an equilibrium is established. Imposing an external radiofrequency field to the constant field at a right angle can cause the Larmor precession of the moments around the constant field, which can induce an electromotive force in a coil \cite{Bloch1,Bloch2}. Furthermore, the induced electromotive force can be transferred into visible signals. This is the principle of nuclear magnetic resonance (NMR) and magnetic resonance imaging (MRI). NMR has been widely used to analyse complex biological materials in chemistry, medicine, and engineering. The phenomenological Bloch equation is utilised to capture the magnetisation dynamics. The empirical vector form of the Bloch equation is
\begin{align*} 
\frac{d{\bf M}(t)}{dt}=\gamma {\bf M}(t)\times {\bf B}(t) +\frac{M_0-M_z(t)}{T_1}{\bf k}-\frac{M_x(t){\bf i}+M_y(t){\bf j}}{T_2},
\end{align*}
where $\mathbf{M}=[M_{x},M_{y},M_{z}]^{T}$ is the magnetization, $\mathbf{B}$ is the static magnet field, $\mathbf{B}(t)=B_{0}\mathbf{k}$, $M_{0}$ is the equilibrium magnetization, $\gamma $ is the gyromagnetic ratio with a relation to the Larmor frequency $\omega _{0}=\gamma B_{0}$. The relaxation terms refer to the time return to equilibrium, where $T_{1}$ is the `longitudinal' relaxation time and $T_{2}$ is the `transversal' relaxation time. To investigate heterogeneous, porous and complex materials exhibiting memory, Magin et al. \cite{Magin09, Magin20} generalise the Bloch equation to the time-fractional Bloch equation. From the view of CTRW, the time-fractional operator relates to an infinite waiting time. Since the waiting time for a water proton cannot be arbitrarily long, we modify the equation as the tempered Bloch equation with finite waiting time, which is
written in the form
\begin{align*}
& \mu_1^{\alpha-1}{^{C}_0D_t^{(\alpha,\rho)}}M_z(t)=\frac{M_0-M_z(t)}{T_1},\\
& \mu_2^{\alpha-1}{^{C}_0D_t^{(\alpha,\rho)}}M_x(t)=\omega_0M_y(t)-\frac{M_x(t)}{T_2},\\
& \mu_2^{\alpha-1}{^{C}_0D_t^{(\alpha,\rho)}}M_y(t)=-\omega_0M_x(t)-\frac{M_y(t)}{T_2}.
\end{align*}
These equations can be recast into
\begin{align} \label{eq3.2}
{^{C}_0D_t^{(\alpha,\rho)}}M_z(t)&=\frac{M_0-M_z(t)}{T'_1},\\\label{eq3.3}
{^{C}_0D_t^{(\alpha,\rho)}}M_x(t)&=\varpi_0M_y(t)-\frac{M_x(t)}{T'_2},\\\label{eq3.4}
{^{C}_0D_t^{(\alpha,\rho)}}M_y(t)&=-\varpi_0M_x(t)-\frac{M_y(t)}{T'_2},
\end{align}
where $\mu _{1}$ and $\mu _{2}$ are constants to preserve units, $\alpha $ is the fractional index, $\rho $ is the tempered parameter, $\varpi _{0}=\omega _{0}\mu _{2}^{1-\alpha }$, $T_{1}^{\prime }=\mu _{1}^{\alpha -1}T_{1}$ and $T_{2}^{\prime }=\mu _{2}^{\alpha -1}T_{2}$. When $\rho =0$, the system recovers the time-fractional Bloch equation proposed in \cite{Magin09}. We consider the analytical and numerical solutions for the system.
\subsection{Analytical solution}

Applying the Laplace transform to (\ref{eq3.2}) leads to
\begin{align*}
\overline{M_z}(s)=\frac{(s+\rho)^{\alpha-1}}{(s+\rho)^{\alpha}+k_1}M_z(0)
+\frac{k_2}{s[(s+\rho)^{\alpha}+k_1]},
\end{align*}
where $k_1=\frac{1}{T'_1}$ and $k_2=\frac{M_0}{T'_1}$. Using the property $\mathcal{L}\{e^{\rho t}f(t)\}=\bar{f}(s-\rho)$, $\mathcal{L}\{t^{\alpha-1}E_{\alpha,\alpha}(-kt^\alpha)\}=\frac{1}{s^\alpha+k}$ and $\mathcal{L}^{-1}\{\bar{f}(s)\bar{g}(s)\}=\int_0^tf(\eta)g(t-\eta)d\eta$ and applying the inverse Laplace transform, we can derive
\begin{align}
M_z(t)=&M_z(0)e^{-\rho t}E_{\alpha}(-k_1t^{\alpha})+k_2\int_0^te^{-\rho \eta}\eta^{\alpha-1}E_{\alpha,\alpha}(-k_1\eta^{\alpha})d\eta\nonumber\\\label{eq3.5}
=&M_z(0)e^{-\rho t}E_{\alpha}\left(-\frac{t^{\alpha}}{T'_1}\right)+\frac{M_0}{T'_1}\int_0^te^{-\rho \eta}\eta^{\alpha-1}E_{\alpha,\alpha}\left(-\frac{\eta^{\alpha}}{T'_1}\right)d\eta.
\end{align}
Suppose that $M_{+}(t)=M_x(t)+iM_y(t)$ with $M_{+}(0)=M_x(0)+iM_y(0)$. Combining (\ref{eq3.3}) and (\ref{eq3.4}) gives
\begin{align} \label{eq3.6}
{^{C}_0D_t^{(\alpha,\rho)}}M_+(t)&=-i\varpi_0M_+(t)-\frac{M_+(t)}{T'_2}.
\end{align}
Applying the Laplace transform and the inverse Laplace transform to  (\ref{eq3.6}), we can obtain
\begin{align} \label{eq3.7}
M_+(t)=M_{+}(0)e^{-\rho t}E_{\alpha}(-k_3t^{\alpha}),
\end{align}
where $k_3=i\varpi_0+\frac{1}{T'_2}$. As the analytical solutions (\ref{eq3.5}) and (\ref{eq3.7}) involve the Mittag-Leffler function and its integral, which is very challenging to calculate, we need to resort to a numerical solution of the problem.
\subsection{Numerical solution}
We use the L1 scheme on graded mesh to solve the tempered problem (\ref{eq3.2})-(\ref{eq3.4}). Define ${\bf M}^n$ as the numerical approximation of ${\bf M}(t_n)$. At $t=t_n$, we can obtain the following numerical scheme:
\begin{align*} 
{_0^C\mathbb{D}_t^{(\alpha,\rho)}}M_z^n&=\frac{M_0-M_z^n}{T'_1},\\
{_0^C\mathbb{D}_t^{(\alpha,\rho)}}M_x^n&=\varpi_0M_y^n-\frac{M_x^n}{T'_2},\\
{_0^C\mathbb{D}_t^{(\alpha,\rho)}}M_y^n&=-\varpi_0M_x^n-\frac{M_y^n}{T'_2}.
\end{align*}

\begin{figure}[H]
\centering
\subfloat[Varying $\alpha$  ]{
\label{fig1a}
\begin{minipage}[t]{0.46\textwidth}
\centering
\scalebox{0.45}[0.45]{\includegraphics{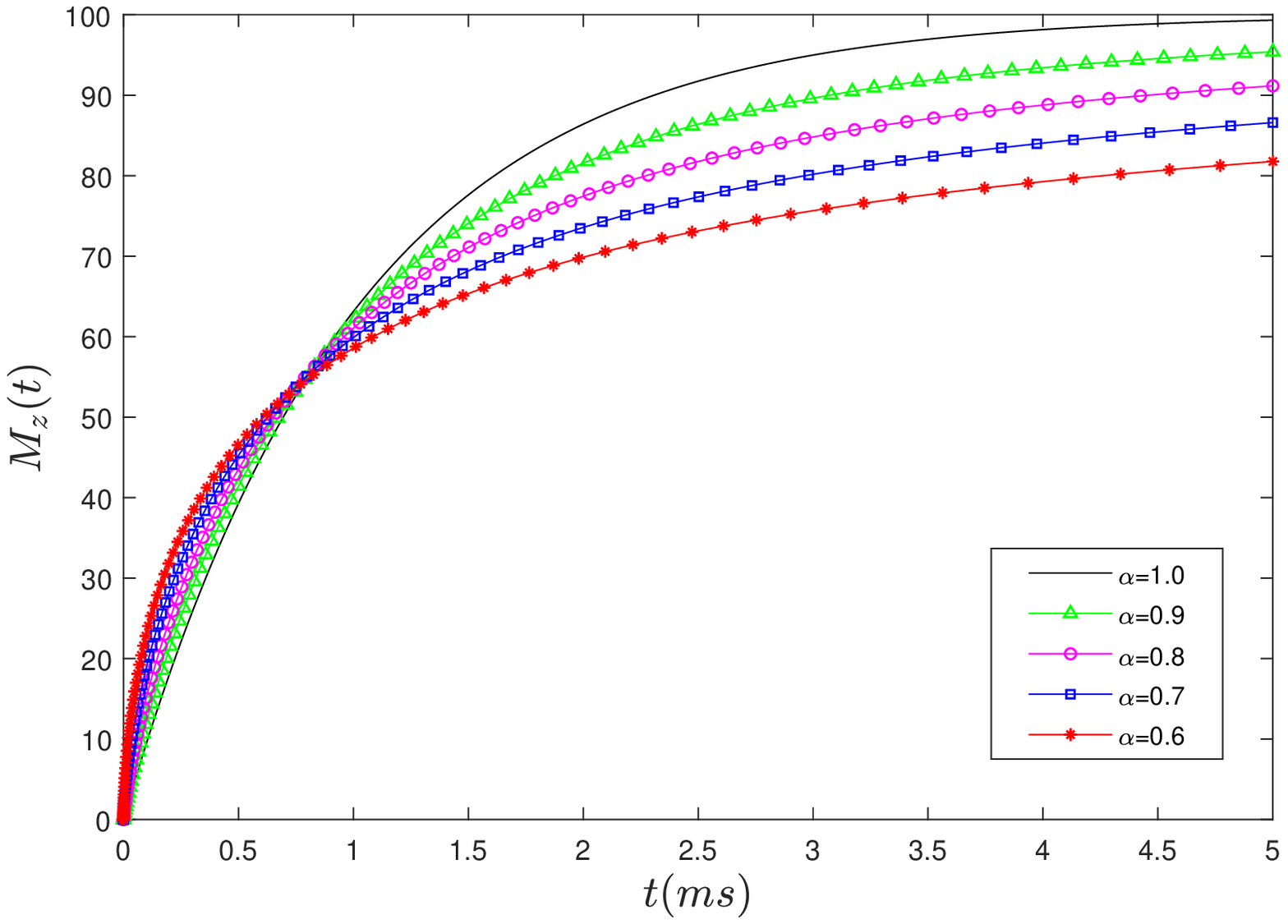}}
\end{minipage}
}
\subfloat[Varying $\rho$ ]{
\label{fig1b}
\begin{minipage}[t]{0.46\textwidth}
\centering
\scalebox{0.49}[0.49]{\includegraphics{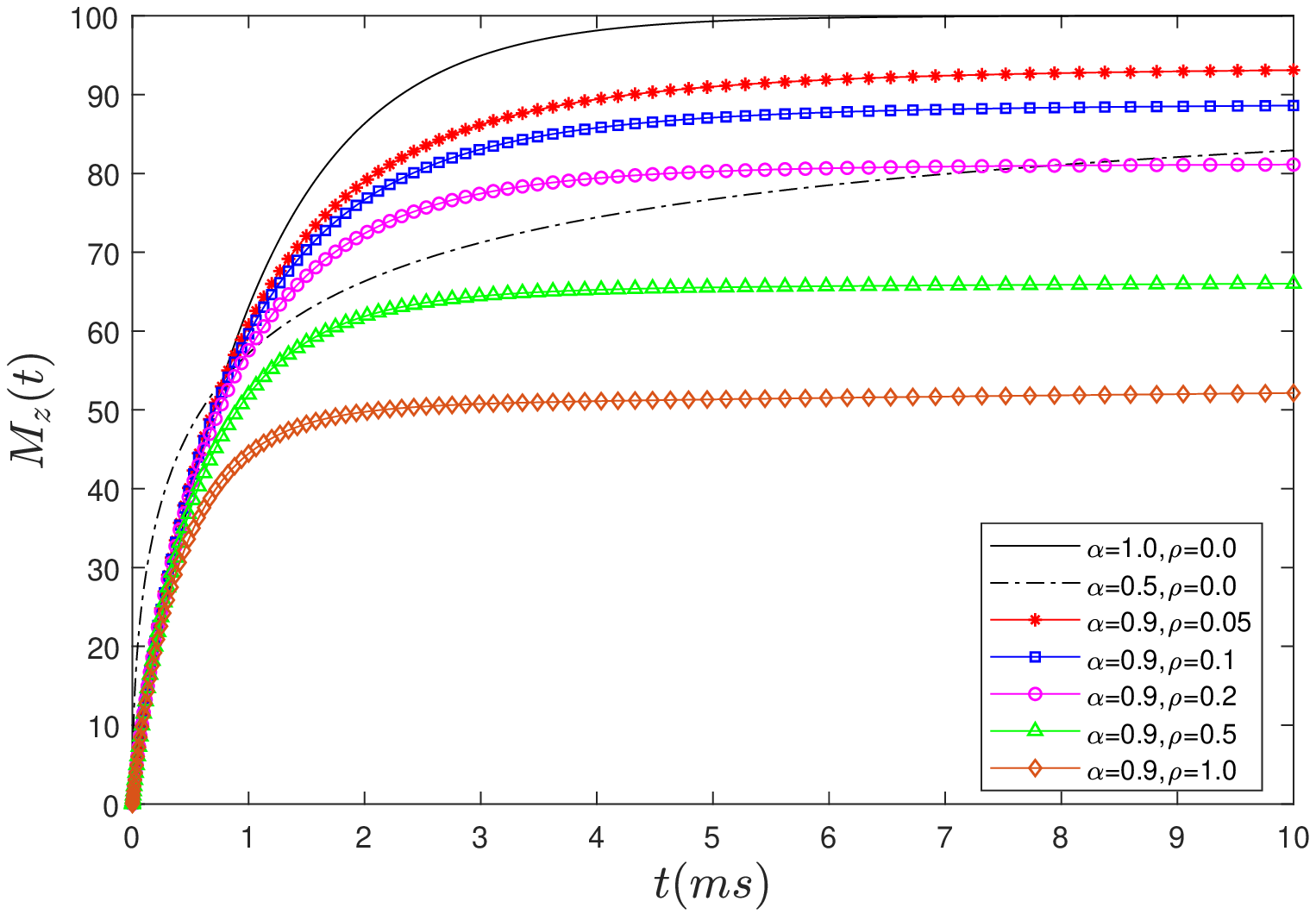}}
\end{minipage}
}
\caption{The evolution of $M_z(t)$ with different $\alpha$ (Figure a) and different $\rho$ (Figure b), where the parameters used are $M_z(0)=0$, $M_0=100$, $T_1'=1(ms)^{\alpha}$.}
\label{fig1}
\end{figure}
\begin{figure}[H]
\begin{center}
\scalebox{0.5}[0.5]{\includegraphics{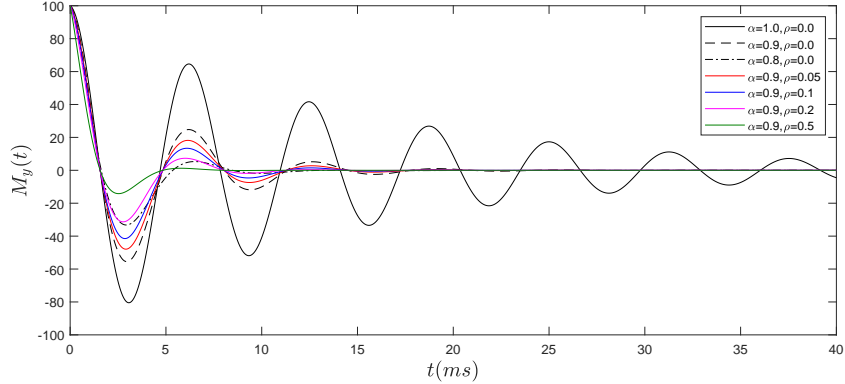}}
\caption{The evolution of $M_y(t)$ with different $\alpha$ and $\rho$, where the parameters used are $T_2'=20(ms)^{\alpha}$ $\tilde{f}_0=\frac{\varpi_0}{2\pi}=160Hz$, $M_x(0)=0$, $M_y(0)=100$.}
\label{fig2}
\end{center}
\end{figure}

\subsection{Numerical examples}

Some numerical results are shown in Figures (\ref{fig1})-(\ref{fig3}). Figure \ref{fig1a} shows that the fractional order $\alpha$ enhances the relaxation at a small time scale and then delays the relaxation later compared to the classical case $\alpha =1$. The smaller the fractional index is, the slower it converges to its final asymptotic value. Figure \ref{fig1b} illustrates that the tempered parameter $\rho $ can accelerate the process to reach its asymptotic value. Contrary to the fractional order $\alpha $, the larger $\rho $ is, the more rapid it converges to its maximum value. In addition, with a large $\alpha $ and moderate $\rho $ ($\alpha =0.9$, $\rho =0.2$), we can obtain a similar final state by choosing small $\alpha $ only ($\alpha =0.5$, $\rho =0.0$). This facilitates avoiding choosing too small $\alpha $ since a small fractional index means strong
heterogeneity or low regularity of the solution.

The effects of $\alpha $ and $\rho $ on $M_{y}(t)$ is presented in Figure \ref{fig2}. We can see that, for the classical case $\alpha =1$, $\rho =0$, it needs a long relaxation time to decay to zero. The fractional order boosts the decay. The smaller $\alpha $ is, the faster it decays. The tempered parameter $\rho $ promotes the decay further. We can observe that it decays faster with $\alpha =0.9$, $\rho =0.2$ than with $\alpha =0.8$, $\rho =0.0$, which is similar to the discussion above. Figure \ref{fig3} displays the effects of $\rho $ on the evolution of $M_{x}(t)$ versus $M_{y}(t)$. Similarly, the tempered parameter advances the decay process to zero.

\begin{figure}[H]
\begin{center}
\scalebox{0.5}[0.5]{\includegraphics{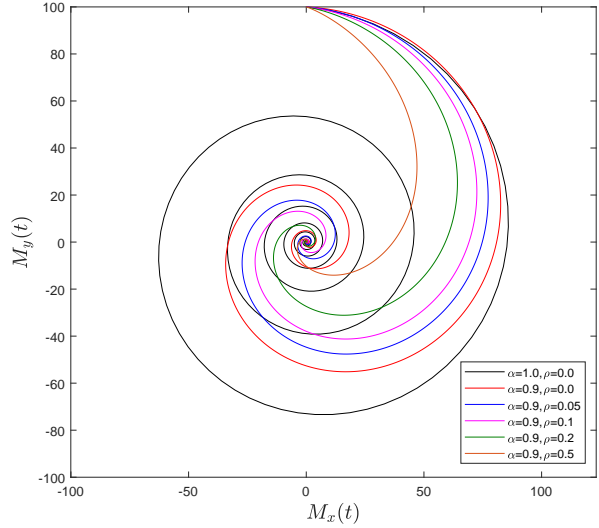}}
\caption{The evolution of $M_x(t)$ versus $M_y(t)$ with different $\rho$, where the parameters used are $T_2'=20(ms)^{\alpha}$ $\tilde{f}_0=\frac{\varpi_0}{2\pi}=160Hz$, $M_x(0)=0$, $M_y(0)=100$.}
\label{fig3}
\end{center}
\end{figure}

\begin{figure}[H]
\centering
\subfloat[ ]{
\label{figN3a}
\begin{minipage}[t]{0.46\textwidth}
\centering
\scalebox{0.42}[0.42]{\includegraphics{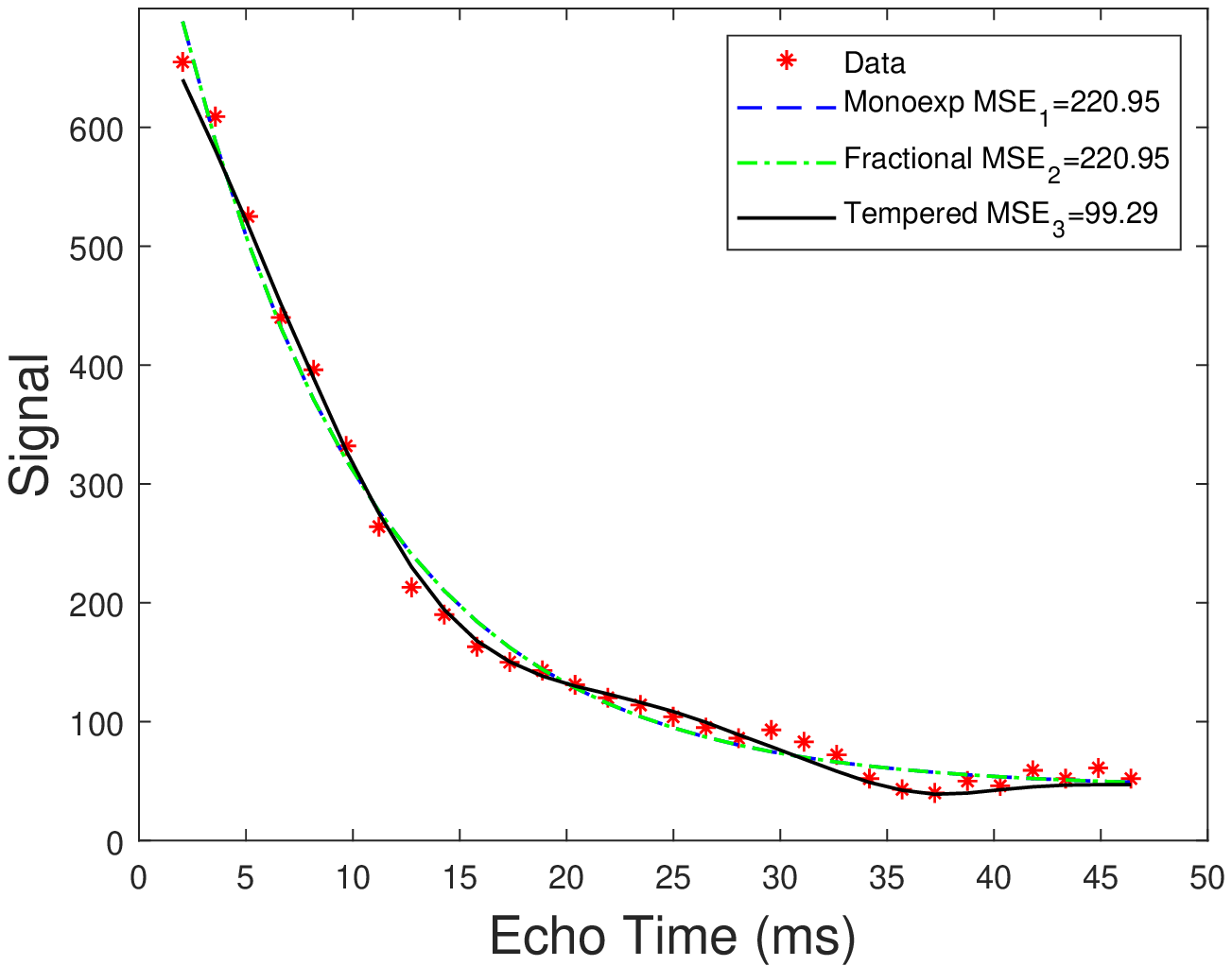}}
\end{minipage}
}
\subfloat[]{
\label{figN3b}
\begin{minipage}[t]{0.46\textwidth}
\centering
\scalebox{0.42}[0.42]{\includegraphics{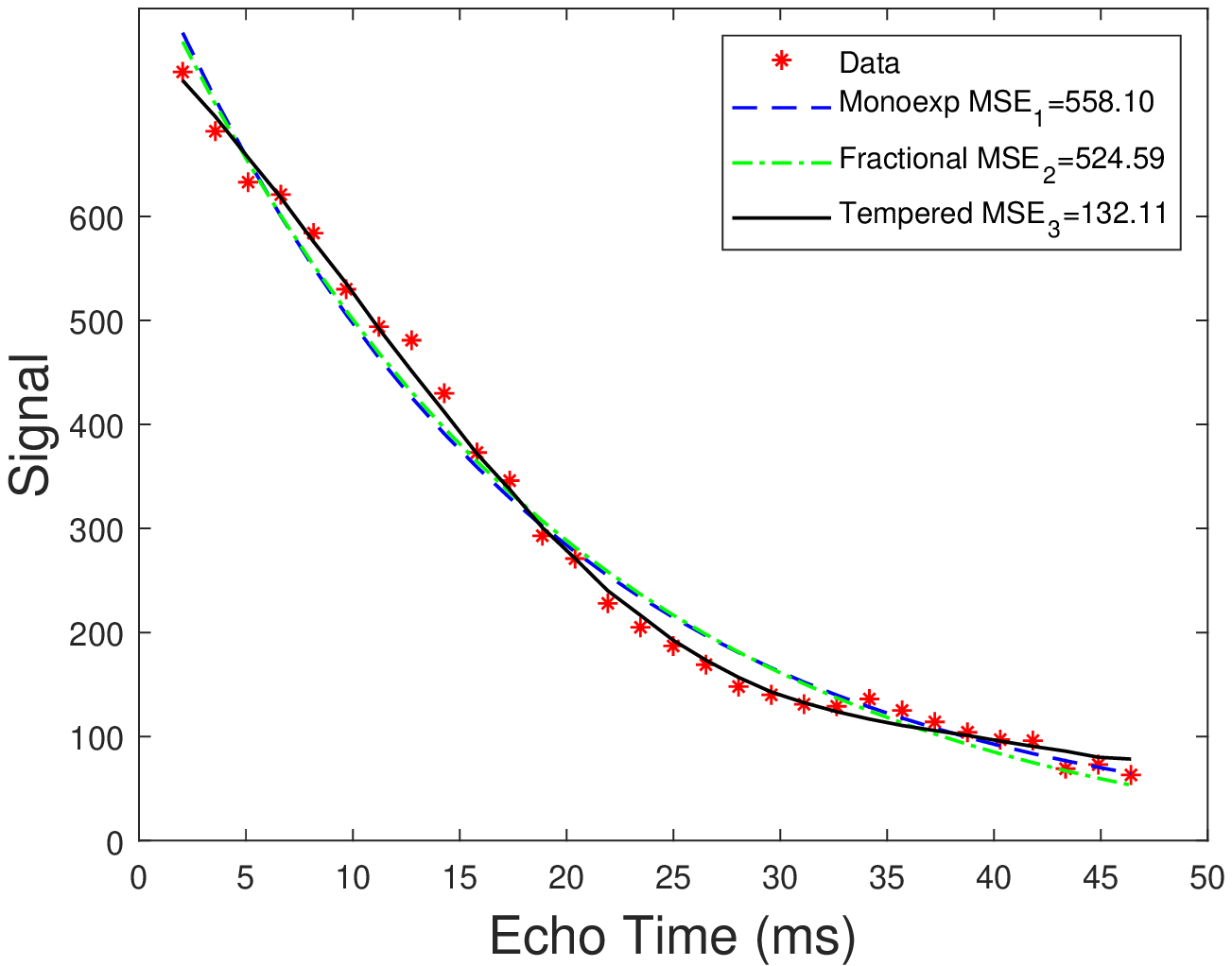}}
\end{minipage}
}
\caption{Voxel-level data fittings of the classical model (Monoexp), time-fractional model (Fractional) and tempered time-fractional model (Tempered). Here we consider two different voxels (locations (a) (132, 55, 47) and (b) (80, 60, 68)) in different brain regions, of which the data is extracted from \cite{Qin17}. The mean square error (MSE) is defined as $\sum_{j=1}^n(y'_j-y_j)^2/n$, where $y_j$ is the real data, $y'_j$ is the predicted data and $n$ is the number of data.}
\label{figN3}
\end{figure}

The Bloch equation is related to the MRI signal by $S(t)=A_0\sqrt{M_x^2(t)+M_y^2(t)}+C$, where $A_0$ is the amplitude of the signal and $C$ is a constant accounting for the background noise \cite{Qin16}. Figure \ref{figN3} shows the voxel-level temporal fitting based on three models, namely the classical mono-exponential model ($A_0\exp(-t/T_2)+C$), the time-fractional model ($A_0E_{\alpha}(-t^{\alpha}/T_2)+C$) \cite{Magin09} and our tempered time-fractional model. The experimental data is extracted from the paper \cite{Qin17}. The `lsqcurvefit' MATLAB routine is used to perform all the data fittings with a maximum number of iteration and a relative tolerance set as $1\times 10^7$ and $1\times 10^{-7}$, respectively. The specific fitting procedure can be referred to \cite{Qin16}. This procedure is generally followed here to determine the initial conditions for our model with $A_0$, $T_2$ and $C$ based on the results from the exponential model and $\alpha$ from the fractional model. These parameters are constrained in ranges to $70-130\%$ of their starting values. $\omega$ and $\rho$ are new parameter to the other two models, which are initialized to $10$ and $0.1$ with ranges confined in $0-250 $ and $0-1$. The mean square error (MSE) is adopted to measure the quality of the fitting. From Figure \ref{figN3} we can see that for the signal at position (a), both the classical model and the time-fractional model have a large MSE while the tempered model has a clear improvement in the accuracy of fitting the voxel-level MRI data with less than a half MSE. For the data at position (b), the classical model does not fit well with an MSE 558.10. Although the time-fractional model improves the fitting little, the MSE is still large with a value of (524.59). In comparison, the MSE for the tempered time-fractional model reduces significantly to 132.11. We conclude that the tempered time-fractional model is effective in fitting the MRI signal.


\section{Application II: Tempered diffusion problems }\label{Sec4}

In the CTRW model, when a Poissonian waiting time pdf together with a Gaussian jump length pdf is applied, the standard diffusion equation can be derived, describing the Brownian motion. When the finite waiting time pdf is replaced with a divergent long-tailed waiting time pdf, it leads to the time-fractional diffusion equation used to characterise the anomalous diffusion \cite{Metzler}. Different from the standard diffusion with mean squared displacement $\langle x^{2}(t)\rangle \sim t$, the mean squared displacement of time-fractional diffusion has the property $\langle x^{2}(t)\rangle \sim t^{\alpha }$, where $0<\alpha <1$ is the fractional order. In this section, we consider a tempered long-tailed waiting time pdf to solve a tempered diffusion problem
\begin{align} \label{eq4.1}
{^{C}_0D_t^{(\alpha,\rho)}}u(x,t)=D\frac{\partial^2u(x,t)}{\partial x^2}+f(x,t),\quad x\in(0,l),~ t\in(0,T],
\end{align}
subject to the initial and boundary conditions
\begin{align} \label{eq4.2}
u(x,0)=\psi(x),\quad x\in[0,l],\quad u(0,t)=u(l,t)=0,\quad t\in(0,T].
\end{align}

\subsection{The mean squared displacement}\label{sec4.1}

To study the mean squared displacement of the problem, we suppose $f(x,t)=0$, $\psi(x)=\delta(x)$. Applying the Laplace transform and Fourier transform successively to obtain
\begin{align*} 
\widehat{\overline{u}}(\omega,s)=\frac{(s+\rho)^{\alpha-1}}{(s+\rho)^{\alpha}+D\omega^2}.
\end{align*}
As $-\frac{\partial^2 \widehat{\overline{u}}(\omega,s)}{\partial \omega^2}\big|_{\omega=0}=-\frac{2D}{(s+\rho)^{\alpha+1}}$, we derive
\begin{align*} 
\langle x^2(t) \rangle =e^{-\rho t}\frac{2Dt^\alpha}{\Gamma(1+\alpha)}.
\end{align*}
Different to the mean squared displacement of the time-fractional diffusion in \cite{Metzler}, one tempered factor $e^{-\rho t}$ is added.

\subsection{Analytical solution}\label{sec4.2}

We first use the finite Fourier transform technique to solve the problem  (\ref{eq4.1})-(\ref{eq4.2}). The associated eigenvalue problem that needs to be solved is
\begin{align*} 
-\frac{d^2 \varphi}{dx^2}=\lambda^2\varphi,\quad \varphi(0)=0,\quad \varphi(l)=0,
\end{align*}
to which the solution is $\lambda_n^2=\frac{n^2\pi^2}{l^2}$ for $n=1,2,\ldots$ and $\varphi_n(x)=\sqrt{\frac{2}{l}}\sin\left( \lambda_nx\right)$. Define the finite Fourier transform
$\widetilde{u}(\lambda_n,t):=\langle u, \varphi_n\rangle=\int_0^lu(x,t)\varphi_n(x)dx$. Applying it to (\ref{eq4.1}), we obtain
\begin{align} \label{eq4.6}
\left\langle {^{C}_0D_t^{(\alpha,\rho)}}u(x,t), \varphi_n\right\rangle=D\left\langle \frac{\partial^2u(x,t)}{\partial x^2} ,\varphi_n\right\rangle+\langle f(x,t),\varphi_n\rangle.
\end{align}
As $D\left\langle \frac{\partial^2u(x,t)}{\partial x^2} ,\varphi_n\right\rangle=-D\left\langle \frac{d^2 \varphi_n}{d x^2} ,u\right\rangle=-D\lambda_n^2\langle u, \varphi_n\rangle$, (\ref{eq4.6}) can be recast into
\begin{align} \label{eq4.7}
{^{C}_0D_t^{(\alpha,\rho)}}\widetilde{u}=-D\lambda_n^2\widetilde{u}+\widetilde{f}.
\end{align}
Applying the Laplace transform to (\ref{eq4.7}) and denoting $\overline{\widetilde{u}}(\lambda_n,s)=\mathcal{L}\{\widetilde{u}(\lambda_n,t)\}$, we can derive
\begin{align} \label{eq4.8}
\overline{\widetilde{u}}=\frac{(s+\rho)^{\alpha-1}\widetilde{u}(0)}{(s+\rho)^{\alpha}+D\lambda_n^2}
+\frac{\overline{\widetilde{f}}}{(s+\rho)^{\alpha}+D\lambda_n^2}.
\end{align}
Imposing the inverse Laplace transform to (\ref{eq4.8}) leads to
\begin{align*} 
\widetilde{u}(\rho_n,t)=\widetilde{u}(0)e^{-\rho t}E_{\alpha}(-D\lambda_n^2 t^\alpha)
+\int_0^te^{-\rho \eta}\eta^{\alpha-1}E_{\alpha,\alpha}\left(-D\lambda_n^2 \eta^\alpha\right)
\widetilde{f}(t-\eta)d\eta.
\end{align*}
Then the analytical solution can be derived as
\begin{align} \label{eq4.10}
u(x,t)=\sqrt{\frac{2}{l}}\sum_{n=1}^{\infty}\left[ \widetilde{u}(0)e^{-\rho t}E_{\alpha}(-D\lambda_n^2 t^\alpha)+\int_0^te^{-\rho \eta}\eta^{\alpha-1}E_{\alpha,\alpha}\left(-D\lambda_n^2 \eta^\alpha\right)
\widetilde{f}(t-\eta)d\eta\right]\sin\left( \frac{n\pi x}{l}\right),
\end{align}
where $\widetilde{u}(0)=\langle \psi, \varphi_n\rangle$, $\widetilde{f}=\langle f, \varphi_n\rangle$.

\subsection{Numerical solution}\label{sec4.3}

\subsubsection{Numerical scheme}
Now we consider the numerical solution to (\ref{eq4.1})-(\ref{eq4.2}). Firstly, we denote a uniform mesh in space domain. Define $h=\frac{l}{M}$, $x_i=ih$, $i=0,1,2,\ldots,M$, where $M$ is a positive integer. For the space Laplacian operator, we use the standard second-order central difference scheme to approximate:
\begin{align*} 
\frac{\partial^2u(x_i,t_n)}{\partial x^2}=\frac{u(x_{i-1},t_n)-2u(x_i,t_n)+u(x_{i+1},t_n)}{h^2}+\mathcal{O}(h^2):=\delta_x^2u(x_i,t_n)+\mathcal{O}(h^2).
\end{align*}
Let $u_i^n$ be the numerical solution to $u(x_i,t_n)$. Then the numerical scheme to (\ref{eq4.1})-(\ref{eq4.2}) is
\begin{align} \label{eq4.12}
&{_0^C\mathbb{D}_t^{(\alpha,\rho)}}u_i^n=D\delta_x^2u_i^n+f_i^n,\quad 1\leq i\leq M-1,~1\leq n\leq N,\\\label{eq4.13}
&u_0^n=u_M^n=0,\quad {\rm{for}}~0<n\leq N,\\\label{eq4.14}
& u_i^0=\psi(x_i), \quad {\rm{for}}~0\leq i\leq M.
\end{align}
And the fast numerical scheme to (\ref{eq4.1})-(\ref{eq4.2}) is
\begin{align} \label{eq4.15}
&{^{FC}_0\mathbb{D}_t^{(\alpha,\rho)}}u_i^n=D\delta_x^2u_i^n+f_i^n,\quad 1\leq i\leq M-1,~1\leq n\leq N,\\\label{eq4.16}
&u_0^n=u_M^n=0,\quad {\rm{for}}~0<n\leq N,\\\label{eq4.17}
& u_i^0=\psi(x_i), \quad {\rm{for}}~0\leq i\leq M.
\end{align}

\subsubsection{Stability}
\begin{thm}\label{stability}
The numerical scheme (\ref{eq4.12}) is unconditional stable and it holds that
\begin{align*}
||u^n||_{\infty}\leq ||u^0||_{\infty}+\tau_n^\alpha\Gamma(2-\alpha)\sum_{j=1}^n\theta_{n,j}||f^j||_{\infty},
\end{align*}
where $\theta_{n,n}=1$,  $\theta_{n,j}=\sum_{k=1}^{n-j}\tau_{n-k}^{\alpha}(d_{n,k}-d_{n,k+1})\theta_{n-k,j}$.
\end{thm}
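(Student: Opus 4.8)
The plan is to prove the estimate by a discrete maximum principle combined with an induction on the time level $n$. \textbf{Kernel form of the operator.} First I would rewrite ${_0^C\mathbb{D}_t^{(\alpha,\rho)}}u^n$ by summation by parts so that it acts directly on $u^n,u^{n-1},\dots,u^0$:
\[
{_0^C\mathbb{D}_t^{(\alpha,\rho)}}u^n=\frac{1}{\Gamma(2-\alpha)}\Big[d_{n,1}u^n-\sum_{k=1}^{n-1}(d_{n,k}-d_{n,k+1})\,e^{-\rho(t_n-t_{n-k})}u^{n-k}-d_{n,n}\,e^{-\rho t_n}u^0\Big],
\]
where $d_{n,k}=\big[(t_n-t_{n-k})^{1-\alpha}-(t_n-t_{n-k+1})^{1-\alpha}\big]/\tau_{n-k+1}$, so that $d_{n,1}=\tau_n^{-\alpha}$. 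The facts I need are: $d_{n,k}>0$; $d_{n,1}>d_{n,2}>\dots>d_{n,n}>0$; and the telescoping identity $\sum_{k=1}^{n-1}(d_{n,k}-d_{n,k+1})+d_{n,n}=d_{n,1}$. Monotonicity holds because $d_{n,k}$ is the secant slope of the concave increasing map $s\mapsto s^{1-\alpha}$ over $[t_n-t_{n-k+1},\,t_n-t_{n-k}]$, and these intervals move to the right as $k$ grows so the slopes decrease; this argument does not see the mesh spacing and hence is valid on the graded mesh. I would also record that $e^{-\rho(t_n-t_{n-k})},\,e^{-\rho t_n}\in(0,1]$, which is the only place where $\rho\ge0$ is used.

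\textbf{One-step $\ell^\infty$ recursion.} Inserting the kernel form into (\ref{eq4.12}), multiplying by $\tau_n^{\alpha}\Gamma(2-\alpha)$, and setting $\mu=\tau_n^{\alpha}\Gamma(2-\alpha)D/h^2$, the scheme becomes $(1+2\mu)u_i^n=\mu(u_{i-1}^n+u_{i+1}^n)+(\text{history})_i+\tau_n^{\alpha}\Gamma(2-\alpha)f_i^n$, the history carrying coefficients $\tau_n^{\alpha}(d_{n,k}-d_{n,k+1})e^{-\rho(t_n-t_{n-k})}$ on $u^{n-k}$ and $\tau_n^{\alpha}d_{n,n}e^{-\rho t_n}$ on $u^0$. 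Choosing $i_0$ with $|u_{i_0}^n|=\|u^n\|_\infty$, using $|u_{i_0\pm1}^n|\le\|u^n\|_\infty$ to absorb the $2\mu\|u^n\|_\infty$ contributions, and applying the triangle inequality with $e^{-\rho(\cdot)}\le1$ on the history, I obtain
\[
\|u^n\|_\infty\le\sum_{k=1}^{n-1}\tau_n^{\alpha}(d_{n,k}-d_{n,k+1})\|u^{n-k}\|_\infty+\tau_n^{\alpha}d_{n,n}\|u^0\|_\infty+\tau_n^{\alpha}\Gamma(2-\alpha)\|f^n\|_\infty,
\]
where by the telescoping identity the coefficients of $\|u^{n-k}\|_\infty$ and $\|u^0\|_\infty$ are nonnegative and sum to $\tau_n^\alpha d_{n,1}=1$. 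Since no restriction on $\tau_n$ or $h$ was used, this already delivers the unconditional part.

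\textbf{Induction on $n$.} The case $n=1$ is immediate: the recursion reduces to $\|u^1\|_\infty\le e^{-\rho\tau_1}\|u^0\|_\infty+\tau_1^{\alpha}\Gamma(2-\alpha)\|f^1\|_\infty$ and $\theta_{1,1}=1$. Assuming the asserted bound at levels $1,\dots,n-1$, I substitute $\|u^{n-k}\|_\infty\le\|u^0\|_\infty+\tau_{n-k}^{\alpha}\Gamma(2-\alpha)\sum_{l=1}^{n-k}\theta_{n-k,l}\|f^l\|_\infty$ into the one-step recursion. The total $\|u^0\|_\infty$-coefficient is then $\sum_{k=1}^{n-1}\tau_n^{\alpha}(d_{n,k}-d_{n,k+1})+\tau_n^{\alpha}d_{n,n}=1$, contributing exactly $\|u^0\|_\infty$; the coefficient accumulated on a fixed $\|f^l\|_\infty$ with $l<n$ is $\tau_n^{\alpha}\Gamma(2-\alpha)\sum_{k=1}^{n-l}\tau_{n-k}^{\alpha}(d_{n,k}-d_{n,k+1})\theta_{n-k,l}=\tau_n^{\alpha}\Gamma(2-\alpha)\theta_{n,l}$ by the stated recursion for $\theta_{n,l}$, while $\|f^n\|_\infty$ carries coefficient $\tau_n^{\alpha}\Gamma(2-\alpha)=\tau_n^{\alpha}\Gamma(2-\alpha)\theta_{n,n}$. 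This closes the induction and reproduces the claimed inequality.

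\textbf{Main obstacle.} The load-bearing step is the kernel representation: producing coefficients $d_{n,k}$ that are simultaneously positive, strictly decreasing and satisfy the exact telescoping identity uniformly on the graded mesh, so that after the tempering factors (which are $\le1$ precisely because $\rho\ge0$) are inserted the history weights form a nonnegative vector summing to one. Everything downstream — the maximum principle and the induction — is then routine; the only point requiring genuine care is verifying that the accumulated coefficient of $\|f^j\|_\infty$ equals $\tau_n^\alpha\Gamma(2-\alpha)\theta_{n,j}$, which is exactly what the recursion defining $\theta_{n,j}$ is designed to guarantee.
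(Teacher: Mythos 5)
Your proof is correct, but it takes a more self-contained route than the paper. The paper's argument is essentially two lines: since ${_0^C\mathbb{D}_t^{(\alpha,\rho)}}u^n = e^{-\rho t_n}\,{_0^C\mathbb{D}_t^{\alpha}}(e^{\rho t_n}u^n)$ and the exponential factor is independent of $x$, the transformed variable $v^n=e^{\rho t_n}u^n$ satisfies the \emph{untempered} L1 scheme on the graded mesh with source $e^{\rho t_n}f^n$; Lemma 4.2 of Stynes, O'Riordan and Gracia is then invoked as a black box to get $\|e^{\rho t_n}u^n\|_\infty\le\|u^0\|_\infty+\tau_n^\alpha\Gamma(2-\alpha)\sum_{j}\theta_{n,j}\|e^{\rho t_j}f^j\|_\infty$, and the claimed bound follows from $e^{-\rho(t_n-t_j)}\le 1$ and $\theta_{n,j}\ge 0$. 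You instead keep the tempering factors inside the discrete operator, re-derive the kernel representation with positive, monotone coefficients $d_{n,k}$, and run the discrete maximum principle and the $\theta_{n,j}$-induction directly — which amounts to unpacking and re-proving the cited lemma with the factors $e^{-\rho(t_n-t_{n-k})}\le 1$ threaded through. Both arguments hinge on the same two facts (monotonicity of the L1 weights on the graded mesh and $\rho\ge 0$ making the tempering factors at most one), and they yield the identical estimate; the paper's transformation is shorter and reuses the existing result, while your version is self-contained, makes the role of $\rho\ge0$ explicit at the exact step where it is used, and would still work in settings where the exponential factor cannot be cleanly factored out of the spatial operator. Your bookkeeping of the coefficient of $\|f^l\|_\infty$ against the recursion defining $\theta_{n,l}$ is the only delicate point, and it checks out.
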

\begin{proof}
According to Lemma 4.2 in \cite{Stynes17}, it is straightforward to derive
\begin{align*}
||e^{\rho t_n} u^n||_{\infty} \leq ||u^0||_{\infty}+\tau_n^\alpha\Gamma(2-\alpha)\sum_{j=1}^n\theta_{n,j}||e^{\rho t_j}f^j||_{\infty},
\end{align*}
which leads to
\begin{align*}
||u^n||_{\infty}&\leq  e^{-\rho t_n}||u^0||_{\infty}+e^{-\rho t_n}\tau_n^\alpha\Gamma(2-\alpha)\sum_{j=1}^n\theta_{n,j}||e^{\rho t_j}f^j||_{\infty}\\
&\leq ||u^0||_{\infty}+\tau_n^\alpha\Gamma(2-\alpha)\sum_{j=1}^n\theta_{n,j}||f^j||_{\infty}.
\end{align*}
The proof is completed.
\end{proof}
\subsubsection{Convergence}
\begin{thm}
Define $u^n$ and $U^n$ as the exact and numerical solution vector, respectively. Then there exists a positive constant $C$ independent of $\tau$ and $h$ such that
\begin{align*}
||u^n-U^n||_{\infty}\leq C(h^2+T^\alpha N^{-\min\{2-\alpha,r\alpha\}}).
\end{align*}
\end{thm}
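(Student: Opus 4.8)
The plan is to follow the standard energy/maximum-principle framework for L1-type schemes on graded meshes, combining the spatial truncation error of the central difference operator with the temporal truncation error already established in Lemma~\ref{lem2.1}. First I would write the error equation: setting $e_i^n := u(x_i,t_n) - U_i^n$, subtract the numerical scheme (\ref{eq4.12}) from the exact equation (\ref{eq4.1}) evaluated at $(x_i,t_n)$. This yields
\begin{align*}
{_0^C\mathbb{D}_t^{(\alpha,\rho)}}e_i^n = D\delta_x^2 e_i^n + R_i^n,\qquad 1\le i\le M-1,\ 1\le n\le N,
\end{align*}
with homogeneous initial and boundary data $e_i^0=0$, $e_0^n=e_M^n=0$, where the consistency error $R_i^n = R_{1,i}^n + \mathcal{O}(h^2)$ collects the temporal truncation error $R_1^n$ from the L1 discretisation of the tempered Caputo operator and the $\mathcal{O}(h^2)$ error from $\delta_x^2$. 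By Lemma~\ref{lem2.1} (applied componentwise in $x$, using that $u(x,\cdot)$ has the same weak singularity in $t$ as the benchmark solution, i.e. $|\partial_t u(x,t)|\le C(1+t^{\alpha-1})$), we have $|R_{1,i}^n|\le C\,n^{-\min\{2-\alpha,r\alpha\}}$, hence $\|R^n\|_\infty \le C(h^2 + n^{-\min\{2-\alpha,r\alpha\}})$.

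Next I would invoke the stability result of Theorem~\ref{stability}, which applies verbatim to the error equation since it has exactly the same form as (\ref{eq4.12}) with forcing $f^j$ replaced by $R^j$ and zero initial data. This gives
\begin{align*}
\|e^n\|_\infty \le \tau_n^\alpha\Gamma(2-\alpha)\sum_{j=1}^n \theta_{n,j}\,\|R^j\|_\infty.
\end{align*}
The remaining work is to bound $\tau_n^\alpha\Gamma(2-\alpha)\sum_{j=1}^n\theta_{n,j}$ uniformly, or more precisely to bound $\tau_n^\alpha\sum_{j=1}^n\theta_{n,j}\,j^{-\min\{2-\alpha,r\alpha\}}$. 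This is the key quantitative estimate; it is precisely the summation lemma used in Stynes--O'Riordan--Gracia (the argument behind Lemma~5.2 and Theorem~5.3 of \cite{Stynes17}): one shows by an induction on $n$ (or by a barrier/discrete-comparison argument) that $\Gamma(2-\alpha)\tau_n^\alpha\sum_{j=1}^n\theta_{n,j}\,\Phi_j \le C\max_j t_j^\alpha \Phi_j \cdot (\text{const})$ for suitable majorants $\Phi_j$, and in particular that $\tau_n^\alpha\Gamma(2-\alpha)\sum_{j=1}^n\theta_{n,j}\le C t_n^\alpha \le CT^\alpha$. Feeding $\|R^j\|_\infty \le C(h^2 + j^{-\min\{2-\alpha,r\alpha\}})$ into this bound, the $h^2$ part contributes $CT^\alpha h^2$ and the temporal part contributes $CT^\alpha N^{-\min\{2-\alpha,r\alpha\}}$ (the factor $N^{-\min\{2-\alpha,r\alpha\}}$ emerging because the worst individual term $j^{-\min\{2-\alpha,r\alpha\}}$ at $j$ near $n=N$ dominates after the weighted summation, exactly as in the graded-mesh L1 analysis). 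Combining yields $\|u^n-U^n\|_\infty \le C(h^2 + T^\alpha N^{-\min\{2-\alpha,r\alpha\}})$.

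The main obstacle is the final summation estimate: controlling $\tau_n^\alpha\sum_{j=1}^n\theta_{n,j}\|R^j\|_\infty$ is delicate because the weights $\theta_{n,j}$ are defined only recursively and the mesh is nonuniform, so the bound cannot be read off directly — one must either reuse the discrete-comparison principle / barrier function from \cite{Stynes17} or redo that induction carefully, checking that the tempering factor $e^{-\rho t}$ (which only multiplies bounded exponentials and is absorbed into the constant $C$, as in the proof of Theorem~\ref{stability}) does not spoil the monotonicity structure of the discrete operator ${_0^C\mathbb{D}_t^{(\alpha,\rho)}}$. Everything else — the error equation, the splitting of the consistency error, and the application of Theorem~\ref{stability} — is routine.
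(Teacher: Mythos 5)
Your proposal is correct and follows essentially the same route as the paper: form the error equation, bound the consistency error by $C(h^2+n^{-\min\{2-\alpha,r\alpha\}})$ via Lemma~\ref{lem2.1} plus the standard central-difference estimate, apply Theorem~\ref{stability}, and close with the weighted summation bound $\tau_n^{\alpha}\sum_{j=1}^n j^{-\beta}\theta_{n,j}\leq T^{\alpha}N^{-\beta}/(1-\alpha)$ from Stynes--O'Riordan--Gracia. The only difference is that you flag this summation inequality as something to re-derive, whereas the paper simply cites it from \cite{Stynes17}; your concern about the tempering factor is already absorbed into Theorem~\ref{stability} exactly as you suspected.
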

\begin{proof}
The truncation error of (\ref{eq4.12}) at $(x_i,t_n)$ is
\begin{align*}
|\mathfrak{R}^n|\leq C(h^2+T^\alpha n^{-\min\{2-\alpha,r\alpha\}}).
\end{align*}
Invoking Theorem \ref{stability}, we have
\begin{align*}
\max|u(x_i,t_n)-u^n_i|&\leq C \tau_n^\alpha\Gamma(2-\alpha)\sum_{j=1}^n\theta_{n,j}||\mathfrak{R}^j||_{\infty}\\
&\leq C\tau_n^\alpha\Gamma(2-\alpha)\sum_{j=1}^n\theta_{n,j} (h^2+T^\alpha j^{-\min\{2-\alpha,r\alpha\}}\\
&\leq CT^\alpha(h^2+ N^{-\min\{2-\alpha,r\alpha\}}),
\end{align*}
where the following inequality \cite{Stynes17} has been used
\begin{align*}
\tau_n^{\alpha}\sum_{j=1}^nj^{-\beta}\theta_{n,j}\leq \frac{T^\alpha N^{-\beta}}{1-\alpha}.
\end{align*}
The proof is completed.
\end{proof}
Similarly, we can obtain the convergence of the fast numerical scheme (\ref{eq4.15})-(\ref{eq4.17}).
\begin{thm}
Define $u^n$ and $U^n$ as the exact and numerical solution vector, respectively. Then there exists a positive constant $C$ independent of $\tau$ and $h$ such that
\begin{align*}
||u^n-U^n||_{\infty}\leq C(h^2+T^\alpha N^{-\min\{2-\alpha,r\alpha\}}+\epsilon).
\end{align*}
\end{thm}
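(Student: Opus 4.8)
The plan is to mirror the convergence proof for the standard (non-fast) scheme, replacing Lemma~\ref{lem2.1} by Lemma~\ref{lem2.3} so that an extra $\varepsilon$ term is carried through the estimates. First I would write down the error equation for the fast scheme: subtracting (\ref{eq4.15}) evaluated at $(x_i,t_n)$ from the exact equation (\ref{eq4.1}), the defect $\mathfrak{R}^n_i$ now has three contributions --- the $O(h^2)$ spatial truncation from the central difference $\delta_x^2$, the temporal L1-on-graded-mesh truncation of size $C\,n^{-\min\{2-\alpha,r\alpha\}}$ (with the usual $t_n^\alpha$-type factor), and the sum-of-exponentials approximation error which, by Lemma~\ref{lem2.3}, is bounded by $C\varepsilon$ uniformly in $n$. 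Hence $|\mathfrak{R}^n_i|\le C\bigl(h^2+T^\alpha n^{-\min\{2-\alpha,r\alpha\}}+\varepsilon\bigr)$.

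Next I would invoke the stability result. The fast operator ${^{FC}_0\mathbb{D}_t^{(\alpha,\rho)}}$ has exactly the same local-in-time leading structure as ${_0^C\mathbb{D}_t^{(\alpha,\rho)}}$ (the coefficient $\tfrac{1}{\tau_n^\alpha\Gamma(2-\alpha)}$ of $u^n$, the factor $e^{-\rho\tau_n}$ on $u^{n-1}$, and a history part that after the substitution $F=e^{\rho t}u$ reduces to the Grünwald-type positivity used in Lemma~4.2 of \cite{Stynes17}); so Theorem~\ref{stability} applies verbatim to the fast scheme, giving $\|u^n-U^n\|_\infty\le \tau_n^\alpha\Gamma(2-\alpha)\sum_{j=1}^n\theta_{n,j}\|\mathfrak{R}^j\|_\infty$. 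I would then split the sum: the $h^2$ and $\varepsilon$ parts are constant in $j$ and are absorbed using $\tau_n^\alpha\sum_{j=1}^n\theta_{n,j}\le \tfrac{T^\alpha}{1-\alpha}$ (the $\beta=0$ case of the weighted inequality from \cite{Stynes17}), while the $j^{-\min\{2-\alpha,r\alpha\}}$ part is handled by $\tau_n^\alpha\sum_{j=1}^n j^{-\beta}\theta_{n,j}\le \tfrac{T^\alpha N^{-\beta}}{1-\alpha}$. Combining the three bounds yields $\|u^n-U^n\|_\infty\le C\bigl(h^2+T^\alpha N^{-\min\{2-\alpha,r\alpha\}}+\varepsilon\bigr)$, which is the claim.

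The only genuinely new point --- and the step I would be most careful about --- is verifying that Theorem~\ref{stability} really does carry over to the fast operator. One should check that the history approximation, built from the recurrence for $\mathcal{U}_{his,i}$ with the linear-interpolation quadrature on $[t_{n-2},t_{n-1}]$, does not spoil the sign pattern / discrete maximum-principle structure that underlies Lemma~4.2 of \cite{Stynes17}; equivalently, that $e^{\rho t_n}\,{^{FC}_0\mathbb{D}_t^{(\alpha,\rho)}}u^n$ can be written, up to the controllable $O(\varepsilon)$ perturbation, in the same ``diagonally dominant with nonpositive off-diagonal weights'' form as the exact L1 discretisation of ${^R_0D_t^\alpha}$ applied to $e^{\rho t}u$. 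Since the SOE weights $\omega_i,s_i$ are positive and the interpolation coefficients on $[t_{n-2},t_{n-1}]$ are the standard nonnegative ones, this holds, and the $\varepsilon$-perturbation is simply lumped into the truncation error rather than affecting stability. Everything else is a routine repetition of the previous two theorems.
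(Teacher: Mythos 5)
Your proposal is correct and follows exactly the route the paper intends: the paper in fact omits the proof entirely, stating only that the result follows ``similarly'' to the preceding convergence theorem, i.e.\ by inserting the extra $\varepsilon$ from Lemma~\ref{lem2.3} into the truncation error and reusing Theorem~\ref{stability} together with the weighted inequality $\tau_n^{\alpha}\sum_{j=1}^{n}j^{-\beta}\theta_{n,j}\leq T^{\alpha}N^{-\beta}/(1-\alpha)$. Your additional check that the stability estimate genuinely carries over to the fast operator ${^{FC}_0\mathbb{D}_t^{(\alpha,\rho)}}$ is a point the paper glosses over, and is a worthwhile refinement rather than a deviation.
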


\subsubsection{Numerical examples}

In this section, for problem (\ref{eq4.1})-(\ref{eq4.2}), we consider the case $f(x,t)=0$, $u(x,0)=\sin (x)$ in the domain $[0,\pi ]$. Using (\ref{eq4.10}), it is straightforward to derive the analytical solution $u(x,t)=e^{-\rho t}E_{\alpha }(-Dt^{\alpha })\sin (x)$. We apply the numerical scheme (\ref{eq4.12})-(\ref{eq4.14}) and the fast numerical scheme (\ref{eq4.15})-(\ref{eq4.17}) to solve the problem. All of the computations were conducted using MATLAB R2018a on a DELL desktop with the configuration:
Intel(R) Core(TM) i7-6700 CPU3.40GHz and RAM 16.0 GB. Firstly, a comparison between the numerical solution and the exact solution at different time is plotted in Figure \ref{fig4}, where the parameters chosen are $M=40$, $N=M^{2}$, $D=0.5$, $\alpha =0.8$, $\rho =0.5$. It can be observed that the numerical solution agrees with the exact solution very well.

\begin{table}[H]
\caption{The error and convergence order of the L1 scheme (\ref{eq4.12}) and the fast numerical scheme (\ref{eq4.15}) for different $\alpha$ and $N$ at $t=1$, where the parameters used are $M=2^{11}$, $D=2$, $\rho=0.5$, $r=\frac{2(2-\alpha)}{\alpha}$, $\epsilon=10^{-9}$.}
\label{tab5}
\centering
\begin{tabular}{c|  c c| c c}
\toprule
\multirow{2}{*}{\centering $N$ (L1 scheme)}
 &\multicolumn{2}{c|}{$\alpha=0.4$} &\multicolumn{2}{c}{$\alpha=0.8$} \\
\cmidrule{2-5}
& Error & Order&  Error   &Order    \\
\midrule
$80$   & 2.0069E-04     &        &  1.0678E-03      &          \\
$160$  & 6.7734E-05     &  1.57  &  4.6677E-04      &  1.19    \\
$320$  & 2.2806E-05     &  1.57  &  2.0363E-04      &  1.20    \\
$640$  & 7.6305E-06     &  1.58  &  8.8752E-05      &  1.20    \\
$1280$ & 2.5381E-06     &  1.59  &  3.8676E-05      &  1.20    \\
$2560$ & 8.3418E-07     &  1.61  &  1.6861E-05      &  1.20    \\
\midrule
 \multirow{2}{*}{\centering $N$ (Fast scheme)}
  &\multicolumn{2}{c|}{$\alpha=0.4$} &\multicolumn{2}{c}{$\alpha=0.8$} \\
\cmidrule{2-5}
& Error  &  Order  &  Error   &  Order     \\
\midrule
$80$   & 2.3540E-04     &        &  1.1033E-03      &          \\
$160$  & 8.2692E-05     &  1.51  &  4.8357E-04      &  1.19    \\
$320$  & 2.8682E-05     &  1.53  &  2.1130E-04      &  1.19    \\
$640$  & 9.8370E-06     &  1.54  &  9.2195E-05      &  1.20    \\
$1280$ & 3.3412E-06     &  1.56  &  4.0201E-05      &  1.20    \\
$2560$ & 1.1159E-06     &  1.58  &  1.7532E-05      &  1.20    \\
\bottomrule
\end{tabular}
\end{table}

\begin{table}[H]
\caption{The error and CPU time comparison between the L1 scheme (\ref{eq4.12}) and the fast numerical scheme (\ref{eq4.15}) for different $\alpha$ and $M$ with $N=M^2$ at $t=1$, where the parameters utilised are $D=2$, $\rho=0.5$, $r=\frac{2(2-\alpha)}{\alpha}$, $\epsilon=10^{-9}$.}
\label{tab6}
\centering
\begin{tabular}{c| c c c| cc c}
\toprule
\multirow{3}{*}{\centering $M$ (L1 scheme)}
 &\multicolumn{3}{c|}{$\alpha=0.4$} &\multicolumn{3}{c}{$\alpha=0.8$} \\
\cmidrule{2-7}
         &  Error         & Order & CPU time(s) & Error            & Order & CPU time(s)   \\
\midrule
$20$    & 3.4560E-04     &        & 0.09          &  5.5454E-04      &       &  0.10  \\
$40$    & 8.4137E-05     &  2.04  & 1.03          &  1.2924E-04      &  2.10 &  1.04  \\
$80$    & 2.0748E-05     &  2.02  & 15.92         &  3.0522E-05      &  2.08 &  15.28  \\
$160$   & 5.1245E-06     &  2.02  & 257.62        &  7.2913E-06      &  2.07 &  256.84  \\
$320$   & 1.2457E-06     &  2.04  & 4836.06       &  1.7585E-06      &  2.05 &  4949.47 \\
\midrule
\multirow{3}{*}{\centering $M$ (Fast scheme) }
 &\multicolumn{3}{c|}{$\alpha=0.4$} &\multicolumn{3}{c}{$\alpha=0.8$} \\
\cmidrule{2-7}
         &  Error         & Order & CPU time(s) & Error            & Order & CPU time(s)   \\
\midrule
$20$    & 3.4995E-04     &       & 0.30         &  5.6048E-04      &       &  0.18  \\
$40$    & 8.4708E-05     &  2.05 & 1.45         &  1.3041E-04      &  2.10 &  0.69  \\
$80$    & 2.0799E-05     &  2.03 & 6.50         &  3.0747E-05      &  2.08 &  3.04  \\
$160$   & 5.0624E-06     &  2.04 & 34.64        &  7.3341E-06      &  2.07 &  16.11  \\
$320$   & 1.0722E-06     &  2.24 & 209.33       &  1.7667E-06      &  2.05 &  95.86 \\
\bottomrule
\end{tabular}
\end{table}

\begin{figure}[H]
\begin{center}
\scalebox{0.5}[0.5]{\includegraphics{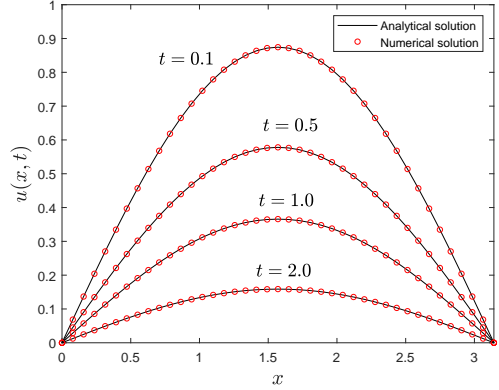}}
\caption{The comparison between the numerical solution and the exact solution at different time, where the parameters chosen are $M=40$, $N=M^2$, $D=0.5$, $\alpha=0.8$, $\rho=0.5$.}
\label{fig4}
\end{center}
\end{figure}

\begin{figure}[H]
\centering
\subfloat[Varying $\alpha$  ]{
\label{fig5a}
\begin{minipage}[t]{0.46\textwidth}
\centering
\scalebox{0.45}[0.45]{\includegraphics{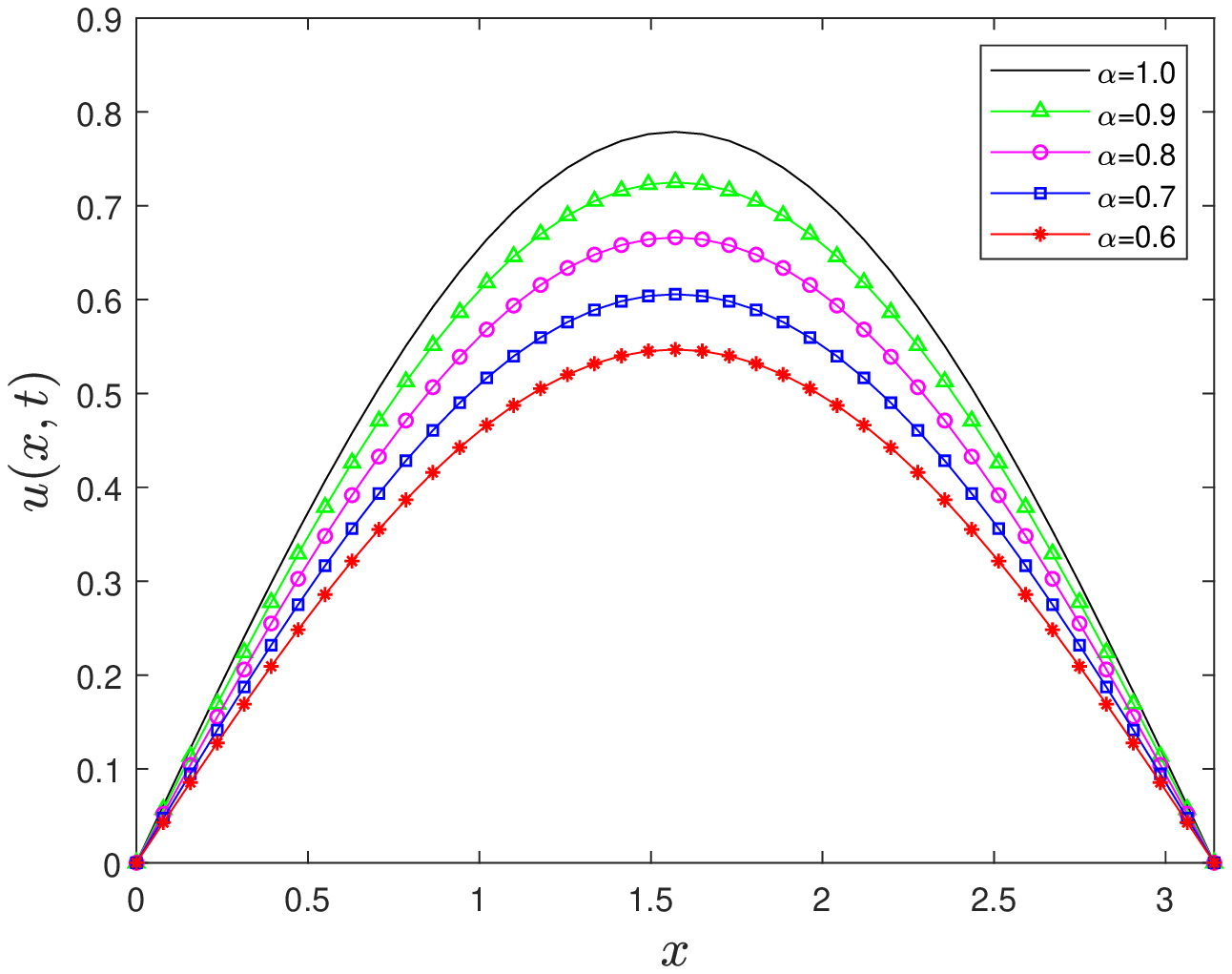}}
\end{minipage}
}
\subfloat[Varying $\rho$ ]{
\label{fig5b}
\begin{minipage}[t]{0.46\textwidth}
\centering
\scalebox{0.36}[0.36]{\includegraphics{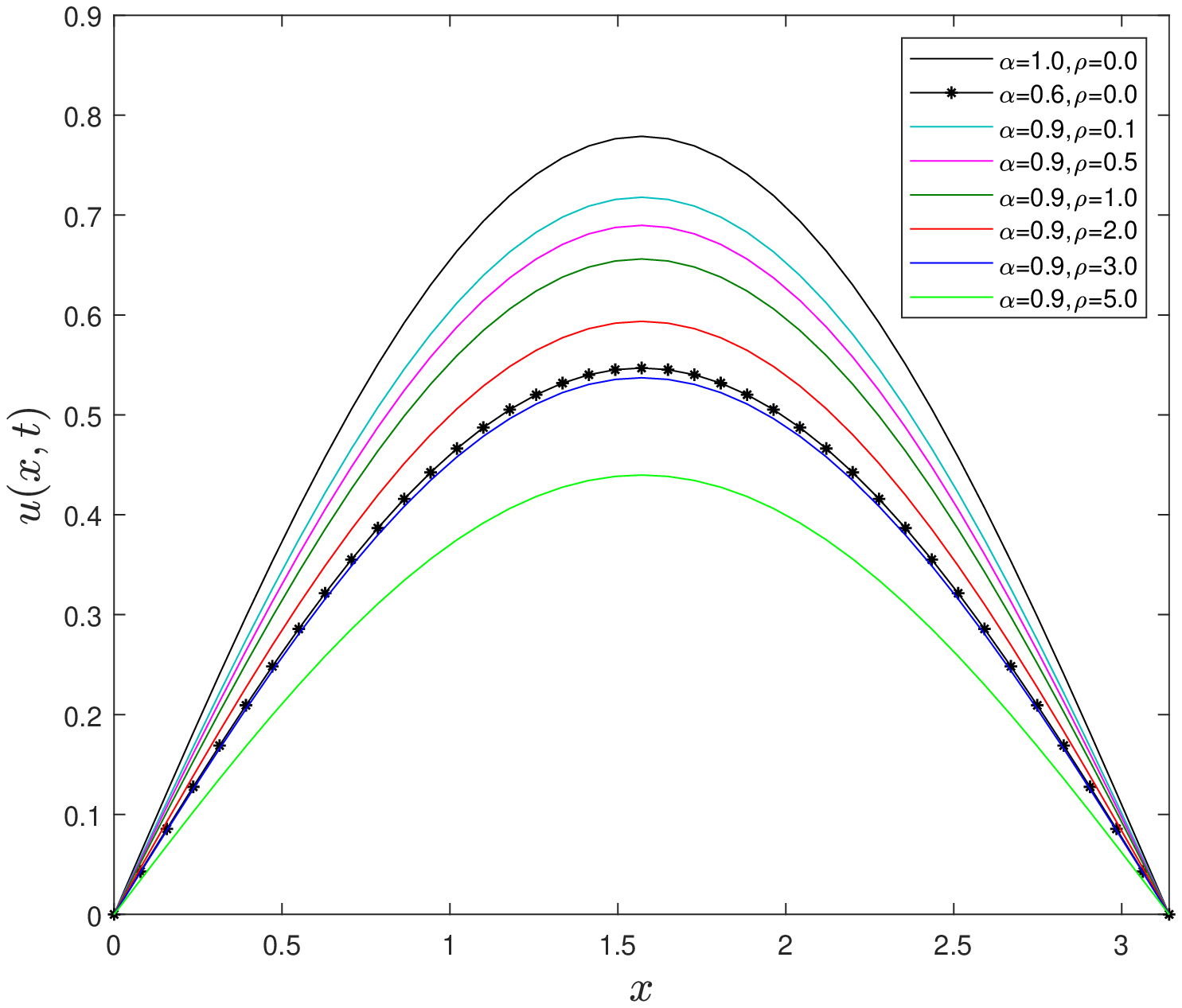}}
\end{minipage}
}
\caption{The impacts of the fractional index $\alpha$ (Figure a) and tempered parameter $\rho$ (Figure b) on the diffusion profile at $t=0.1$ with $D=2.5$, $M=40$, $N=M^2$.}
\label{fig5}
\end{figure}
Next, we compare the convergence of the two numerical schemes. Table \ref{tab5} shows the maximum error and convergence order of the two numerical schemes for different $\alpha $ and $N$ at $t=1$, where the parameters used are $M=2^{11}$, $D=2$, $\rho =0.5$, $r=\frac{2(2-\alpha )}{\alpha }$, $\epsilon =10^{-9}$. It can be seen that the optimal $2-\alpha $ order is obtained, which illustrates the effectiveness of the L1 scheme on the graded mesh to solve the tempered problem. Table \ref{tab6} presents the error and CPU time comparison between the normal numerical scheme (\ref{eq4.12}) and the fast numerical scheme (\ref{eq4.15}) for different $\alpha $ and $M$ with $N=M^{2}$ at $t=1$, where the parameters utilised are $D=2$, $\rho =0.5$, $r=\frac{2(2-\alpha )}{\alpha }$, $\epsilon =10^{-9}$. We can observe that the L1 scheme on the graded mesh is time-consuming for a large $M$ and the
amount of time for $\alpha =0.4$ and $\alpha =0.8$ is not too much different. In contrast to the normal numerical scheme, the fast numerical scheme can reduce the CPU time significantly without losing accuracy, and there is a clear difference for the total time between the case $\alpha =0.4$ and $\alpha =0.8$. This demonstrates the strong feasibility and applicability to adapt the fast method to deal with the tempered problem. Furthermore, we analyse the impacts of the fractional index $\alpha $ and tempered parameter $\rho $. Figure \ref{fig5a} shows that for the general anomalous diffusion $\rho =0$, the fractional order can boost the diffusion compared to the classical diffusion ($\alpha =1$). The smaller the fractional order is, the faster it diffuses. Compared to the fractional order $\alpha $, the tempered parameter $\rho $ (Figure \ref{fig5b}) can accelerates the diffusion further. The larger $\rho $ is, the more rapid it decays, and with a large $\alpha $ and a moderate $\rho $ ($\alpha =0.9$, $\rho =3.0$) it diffuses faster than with a single fractional order $\alpha $ ($\alpha =0.6$), which affirms the discussion aforementioned again.

%

\section{Application III: Tempered two-layered problems} \label{Sec5}

\begin{figure}[H]
\begin{center}
\scalebox{0.5}[0.5]{\includegraphics{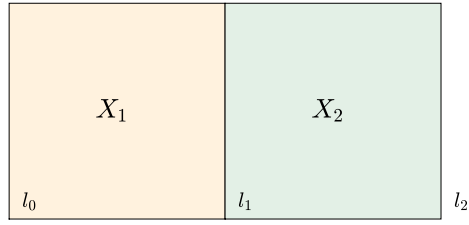}}
\caption{An illustration of a two-layered problem.}
\label{fig6}
\end{center}
\end{figure}
It is well known that the diffusion-induced MRI signal attenuation curve diverges from the mono-exponential decay at high b-values for human brain tissues. A stretched exponential model \cite{Bennett03} was proposed to describe the diffusion-induced signal attenuation effectively, which proves to be a fundamental extension of the fractional Bloch-Torrey equation \cite{Magin08}. In \cite{Zhou10}, Zhou and co-workers applied the fractional models to analyse the diffusion images of human brain tissues in vivo, in which there was a clear contrast between the gray matter and white matter for the diffusivity ($D$) and fractional index ($\beta $). For example, the diffusivity and fractional index for the white matter are $D\approx (0.41\pm0.008)\times 10^{-3}$ and $\beta \approx 0.64\pm 0.01$ while those for the grey matter are $D\approx (0.66\pm 0.007)\times 10^{-3}$ and $\beta \approx0.82\pm 0.01$. It means different tissues exhibit distinct memory or heterogeneity in a heterogeneous medium, which motives us to generalise the
fractional-order model to consider the anomalous diffusion in a composite material. To date, Zeng et al. \cite{Zeng19} used a discrete least squares collocation method to deal with a coupled system of time-fractional diffusion equations with different fractional indices in an irregularly shaped region. Feng et al. \cite{Feng21,Feng21b} proposed a 2D
time-fractional model to study the underlying transport phenomena in a binary medium based on the Riemann-Liouville fractional derivative, in which it showed that the generalised transport model could exhibit the correct physical solution behaviour and produce a more accurate overall mass balance. In this section, we focus on a two-layered problem with a tempered operator (see Figure \ref{fig6}):
\begin{align} \label{eq5.1}
{^{C}_0D_t^{(\alpha_1,\rho_1)}}X_1(x,t)&=D_1\frac{\partial^2X_1(x,t)}{\partial x^2}+S_{a_1}+S_{b_1}X_1,\quad x\in(l_0,l_1),\\\label{eq5.2}
{^{C}_0D_t^{(\alpha_2,\rho_2)}}X_2(x,t)&=D_2\frac{\partial^2X_2(x,t)}{\partial x^2}+S_{a_2}+S_{b_2}X_2,\quad x\in(l_1,l_2),
\end{align}
subject to the initial and boundary conditions
\begin{align} \label{eq5.3}
&X_1(x,0)=X_{1,0}(x),\quad X_2(x,0)=X_{2,0}(x),\\ \label{eq5.4}
&X_1(l_0,t) =f_L(t),\quad  X_2(l_2,t)=f_R(t),
\end{align}
where $D_i$ is the diffusivity coefficients, $S_{a_i}$ and $S_{b_i}$ are some constants. To guarantee the flux exchange at the interface $x=l_1$ is consistent with intrinsic physics, we define the following boundary conditions
\begin{align} \label{eq5.5}
X_1(l_1,t)=X_2(l_1,t),\quad D_{1}\frac{\partial X_1(l_1,t) }{\partial x}=D_{2}\frac{\partial X_2(l_1,t) }{\partial x}.
\end{align}

\subsection{Semi-analytical solution}

Problem (\ref{eq5.1})-(\ref{eq5.5}) will be solved using the finite Fourier and Laplace transforms. Define $d_i=l_i-l_{i-1},i=1,2$, $\lambda_{i,n}$ be the eigenvalues and $\varphi_{i,n}(x)$ be the corresponding eigenfunctions. Then the Sturm-Liouville system in each layer reads:
\begin{align*}
-\frac{d^2  \varphi_{1,n}(x)}{dx^2} = \lambda_{1,n}^2 \varphi_{1,n}(x), \quad\varphi_{1,n}(l_{0})=0,\quad \frac{\varphi_{1,n}(l_{1})}{dx}=0,\\
-\frac{d^2 \varphi_{2,n}(x)}{dx^2}  = \lambda_{2,n}^2 \varphi_{2,n}(x), \quad \frac{\varphi_{2,n}(l_{1})}{dx}=0,\quad\varphi_{2,n}(l_{2})=0.
\end{align*}
It is straightforward to derive that $\lambda_{1,n}=\frac{(2n+1)\pi}{2d_1}$, $\varphi_{1,n}=\sqrt{\frac{2}{d_1}}\sin[\lambda_{1,n}(x-l_{0})]$ and $\lambda_{2,n}=\frac{(2n+1)\pi}{2d_2}$, $\varphi_{2,n}=\sqrt{\frac{2}{d_2}}\sin[\lambda_{2,n}(l_{2}-x)]$. Define the finite Fourier transform $\widetilde{X}_i(\lambda_{i,n},t) := \langle X_i,\varphi_{i,n} \rangle= \int_{l_{i-1}}^{l_i} \, X_i(x,t)\, \varphi_{i,n}(x)\, dx$ within the $i^{\mathrm{th}}$ layer. We apply the finite Fourier transform to obtain
\begin{align}\label{eq5.7}
{^{C}_0D_t^{(\alpha_i,\rho_i)}} \widetilde{X}_i &=D_{i}  \left\langle   \frac{\partial^2 X_i}{\partial x^2}, \varphi_{i,n}  \right\rangle + S_{a_i} \left\langle 1,\varphi_{i,n}  \right\rangle + S_{b_i} \,\widetilde{{X}_i},\quad i=1,2.
\end{align}
Integrating by parts the first term on the righthand side yields
\begin{align}
D_{i} \left\langle \frac{\partial^2 X_i}{\partial x^2}, \varphi_{i,n} \right\rangle &= -D_{i} \left\{ \left\langle -\frac{d^2}{dx^2} \varphi_{i,n}, X_i \right\rangle + \left[ X_i(l_{i},t)\frac{d \varphi_{i,n}}{dx}(\l_{i}) - X_i(l_{i-1},t) \frac{d \varphi_{i,n}}{dx}(l_{i-1})\right.\right. \nonumber \\\label{eq5.8}
& - \left. \left. \frac{\partial X_i}{\partial x}(l_i,t) \varphi_{i,n}(l_i) + \frac{\partial X_i}{\partial x}(l_{i-1},t) \varphi_{i,n}(l_{i-1})\right] \right\}.
\end{align}
Next, define $D_{1}\frac{\partial X_1(l_1,t) }{\partial x}=D_{2}\frac{\partial X_2(l_1,t) }{\partial x}:= v_{12}(t)$. Substituting (\ref{eq5.4}), (\ref{eq5.5}) and (\ref{eq5.8}) into (\ref{eq5.7}), the transformed layer equations are then given by
\begin{align*}
{^{C}_0D_t^{(\alpha_1,\rho_1)}} \widetilde{X}_1 &=-D_{1} \lambda_{1,n}^2 \widetilde{X}_1 + D_1f_L(t)\frac{d \varphi_{1,n}(l_0)}{dx} +v_{12}(t)\varphi_{1,n}(l_1)+S_{a_1}\langle 1,\varphi_{1,n}\rangle+S_{b_1}\, \widetilde{X}_1,\\ 
{^{C}_0D_t^{(\alpha_2,\rho_2)}} \widetilde{X}_2 &=-D_{2} \lambda_{2,n}^2 \widetilde{X}_2 - v_{12}(t) \varphi_{2,n}(l_1)-D_2f_R(t)\frac{d \varphi_{2,n}(l_2)}{dx}+ S_{a_2} \langle 1,\varphi_{2,n} \rangle + S_{b_2} \, \widetilde{X}_2,
\end{align*}
together with the transformed initial conditions $\widetilde{X}_i(\lambda_{i,n},0) = \langle X_{i,0}(x),\varphi_{i,n} \rangle :=\widetilde{X}_{i,0}, i=1,2$. We now apply the Laplace transform in time and denote $\overline{\widetilde{X}}_i(\lambda_{i,n},s) = \mathcal{L}\left\{ \widetilde{X}_i(\lambda_{i,n},t) \right\},i=1,2$ to obtain
\begin{align*}
(s+\rho_1)^{\alpha_1} \overline{\widetilde{X}}_1 -(s+\rho_1)^{\alpha_1-1} \widetilde{X}_{1,0}  = -D_{1} \lambda_{1,n}^2  \overline{\widetilde{X}}_1 + D_1\bar{f_L}(s)\frac{d \varphi_{1,n}(l_0)}{dx} +\bar{v}_{12}(s) \varphi_{1,n}(l_1)+\frac{S_{a_1}{\mathbb{I}_{1,n}}}{s}  + S_{b_1} \overline{\widetilde{X}}_1,     \\
(s+\rho_2)^{\alpha_2} \overline{\widetilde{X}}_2 -(s+\rho_2)^{\alpha_2-1}  \widetilde{X}_{2,0} = -D_{2} \lambda_{2,n}^2  \overline{\widetilde{X}}_2-\bar{v}_{12}(s) \varphi_{2,n}(l_1)-D_2\bar{f}_{R}(s)\frac{d \varphi_{2,n}(l_2)}{dx}+\frac{S_{a_2}{\mathbb{I}_{2,n}}}{s}  + S_{b_2} \overline{\widetilde{X}}_2,
\end{align*}
which can be rearranged into the form
\begin{align}\label{eq5.11}
\overline{\widetilde{X}}_1(\lambda_{1,n},s)=\frac{(s+\rho_1)^{\alpha_1-1}\widetilde{X}_{1,0}}{\eta_{1,n}(s)} +\frac{1}{\eta_{1,n}(s)}\left[D_1\bar{f_L}(s)\frac{d \varphi_{1,n}(l_0)}{dx} +\bar{v}_{12}(s) \varphi_{1,n}(l_1)\right] +\frac{S_{a_1}\mathbb{I}_{1,n}}{s\eta_{1,n}(s)}, \\
\label{eq5.12}
\overline{\widetilde{X}}_2(\lambda_{2,n},s)=\frac{(s+\rho_2)^{\alpha_2-1}\widetilde{X}_{2,0}}{\eta_{2,n}(s)} -\frac{1}{\eta_{2,n}(s)} \left[\bar{v}_{12}(s)\varphi_{2,n}(l_1)+D_2\bar{f}_{R}(s)\frac{d \varphi_{2,n}(l_2)}{dx}\right]+\frac{S_{a_2}\mathbb{I}_{2,n}}{s\eta_{2,n}(s)},
\end{align}
where $\mathbb{I}_{i,n}=\langle 1,\varphi_{i,n} \rangle$ and $\eta_{i,n}(s)=(s+\rho_i)^{\alpha_i}+D_{i} \lambda_{i,n}^{2}-S_{b_i}$, $i=1,2$. In order to calculate the unknown interfacial flux value $\bar{v}_{12}(s)$, we need the boundary condition at the interface:
\begin{align*}
\overline{X}_1(l_1,s) =\overline{X}_2(l_1,s).
\end{align*}
Noting that  $\overline{X}_i(x,s)=\sum_{n=0}^{\infty}\overline{\widetilde{X}}_i(\lambda_{i,n},s) \varphi_{i,n}(x)$, $i=1,2$, then we have
\begin{align}\label{eq5.14}
\sum_{n=0}^{\infty}\overline{\widetilde{X}}_1(\lambda_{1,n},s) \varphi_{1,n}(l_1)= \sum_{n=0}^{\infty}\overline{\widetilde{X}}_2(\lambda_{2,n},s) \varphi_{2,n}(l_1).
\end{align}
Substituting the expressions (\ref{eq5.11}) and (\ref{eq5.12}) into (\ref{eq5.14}), (\ref{eq5.14}) can be rearranged into the following form:
\begin{align*}
& \sum_{n=0}^\infty \left\{ \frac{\varphi^2_{1,n}(l_1)}{\eta_{1,n}(s)}  +   \frac{\varphi^2_{2,n}(l_1)}{\eta_{2,n}(s)} \right\} \overline{v}_{12}(s)\nonumber\\
=&-\sum_{n=0}^\infty \left(\frac{(s+\rho_1)^{\alpha_1-1}\widetilde{X}_{1,0}}{\eta_{1,n}(s)}+\frac{D_1\bar{f_L}(s)}{\eta_{1,n}(s)}\frac{d \varphi_{1,n}(l_0)}{dx}+\frac{S_{a_1}\mathbb{I}_{1,n}}{s\eta_{1,n}}\right) \varphi_{1,n}(l_1)\nonumber\\
+&\sum_{n=0}^\infty \left( \frac{(s+\rho_2)^{\alpha_2-1}\widetilde{X}_{2,0}}{\eta_{2,n}(s)} -\frac{D_2\bar{f}_{R}(s)}{\eta_{2,n}(s)}\frac{d \varphi_{2,n}(l_2)}{dx}+\frac{S_{a_2}\mathbb{I}_{2,n}}{s\eta_{2,n}(s)}
  \right) \varphi_{2,n}(l_1),
\end{align*}
which can be solved for $\overline{v}_{12}(s)$ at a given value of $s$. Applying the inverse Laplace transform and solved the solutions numerically within each layer, we can obtain
\begin{align*}
{\widetilde{X}}_i(\lambda_{i,n},t)&=\mathcal{L}^{-1}\left\{ {\overline{\widetilde{X}}}_i(\lambda_{i,n},s) \right\} = \frac{1}{2 \pi i} \int_{\Gamma} e^{st} \overline{{\widetilde{X}}}_i(\lambda_{i,n},s) \, ds\\
&= \frac{1}{2 \pi i} \int_{\Gamma} \frac{e^z}{t}  \overline{{\widetilde{X}}}_i(\lambda_{i,n},z/t) \, dz \approx -2\Re\left( \sum_{k=1}^{K/2}c_{2k-1} \frac{ \overline{{\widetilde{X}}}_i(\lambda_{i,n},z_{2k-1}/t)}{t}\right),
\end{align*}
where $z=st$, $c_{2k-1}$ and $z_{2k-1}$ are the residues and poles of the near-best minimax approximation of $e^z$ on the negative real line by rational functions of type $(K,K)$ as computed by the Carath\'{e}odor--Fej\'{e}r method \cite{Tref06}. The final solution in each layer can be written as
\begin{equation*}
X_i(x,t)=\sum_{n=0}^{\infty}\widetilde{X}(\lambda_{i,n},t) \varphi_{i,n}(x),\quad i=1,2.
\end{equation*}

\subsection{Numerical solution }

Firstly, we do the grid partition. Define $t_n=T\left( \frac{n}{N} \right)^r $, $r=\min\left\{ \frac{2(2-\alpha_1)}{\alpha_1} ,\frac{2(2-\alpha_2)}{\alpha_2}  \right\}$, $n=0,1,2,\ldots,N$, $\tau_n=t_n-t_{n-1}$, $n=1,2,\ldots,N$. Define $x_i=l_0+ih$, $i=0,1,2,\ldots,M$, where $h=\frac{l_2-l_0}{M}$ is the uniform spatial step. Divide the grid points into two parts: on layer 1 $\{ x_0,x_1,\ldots,x_{M_1}\}$ and on layer 2 $\{ x_{M_1}, x_{M_1+1}, \ldots,x_{M}\}$. Applying the L1 scheme and central difference scheme, we can obtain the numerical solution of the two-layered problem:
\begin{align*} 
&{_0^C\mathbb{D}_t^{(\alpha_1,\rho_1)}}X^n_{1,i}=D_1\delta_x^2X^n_{1,i}+S_{a_1}+S_{b_1}X^n_{1,i},\quad 1\leq i\leq M_1,~1\leq n\leq N,\\
&{_0^C\mathbb{D}_t^{(\alpha_2,\rho_2)}}X^n_{2,i}=D_2\delta_x^2X^n_{2,i}+S_{a_2}+S_{b_2}X^n_{2,i},\quad M_1\leq i\leq M,~1\leq n\leq N.
\end{align*}
Exploiting the boundary conditions (\ref{eq5.5}) to overlap the value in $x=l_1$, the matrix form in terms of the solution on $[l_0,l_2]$ can be derived, which can be solved using the general iterative method.

\subsection{Numerical examples}

\begin{figure}[H]
\begin{center}
\scalebox{0.5}[0.5]{\includegraphics{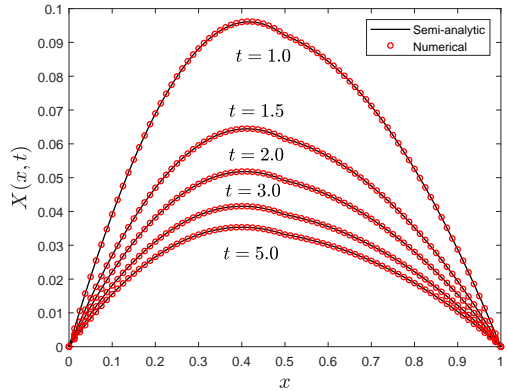}}
\caption{The comparison between the numerical solution and the semi-analytical solution at different time for problem (\ref{eq5.1})-(\ref{eq5.5}), where the parameters chosen are $\alpha_1=0.9$, $\alpha_2=0.8$, $\rho_1=0.1$, $\rho_2=0.5$, $D_1=0.25$, $D_2=0.5$, $S_{a_1}=S_{a_2}=0.1$, $S_{b_1}=S_{b_2}=-0.1$.}
\label{fig7}
\end{center}
\end{figure}

We consider the two-layered problem on a domain $[0,1]$ with $l_{1}=0.5$ and initial conditions $X_{1,0}(x)=X_{2,0}(x)=1$ and boundary conditions $f_{L}(t)=f_{R}(t)=0$. Figure \ref{fig7} shows a comparison between the semi-analytical solution and the numerical solution at different times, from which there is perfect agreement between the semi-analytical solution and
the numerical solution. It demonstrates that both methods work well. Next, we consider a scenario where one layer is normal material characterised by $\alpha _{1}=1$, $\rho _{1}=0$ while the other layer exhibits memory ($\alpha_{2}\neq 1$). Figure \ref{fig8a} presents the effects of the fractional index $\alpha _{2}$ on the solution profile. It can be seen that a small $
\alpha _{2}$ can accelerate the diffusion. Figure \ref{fig8b} displays the impacts of the tempered parameter $\rho _{2}$ on the solution profile. Similarly, the tempered parameter $\rho _{2}$ can further promote the decay and with a moderate parameter pair $\alpha _{2}=0.9$, $\rho _{2}=0.5$, it decays faster than that only with a small $\alpha _{2}$. We can see that the influence of the tempered parameter on the two-layered problem is similar to that on a homogeneous medium.

\begin{figure}[H]
\centering
\subfloat[Varying $\alpha$  ]{
\label{fig8a}
\begin{minipage}[t]{0.46\textwidth}
\centering
\scalebox{0.45}[0.45]{\includegraphics{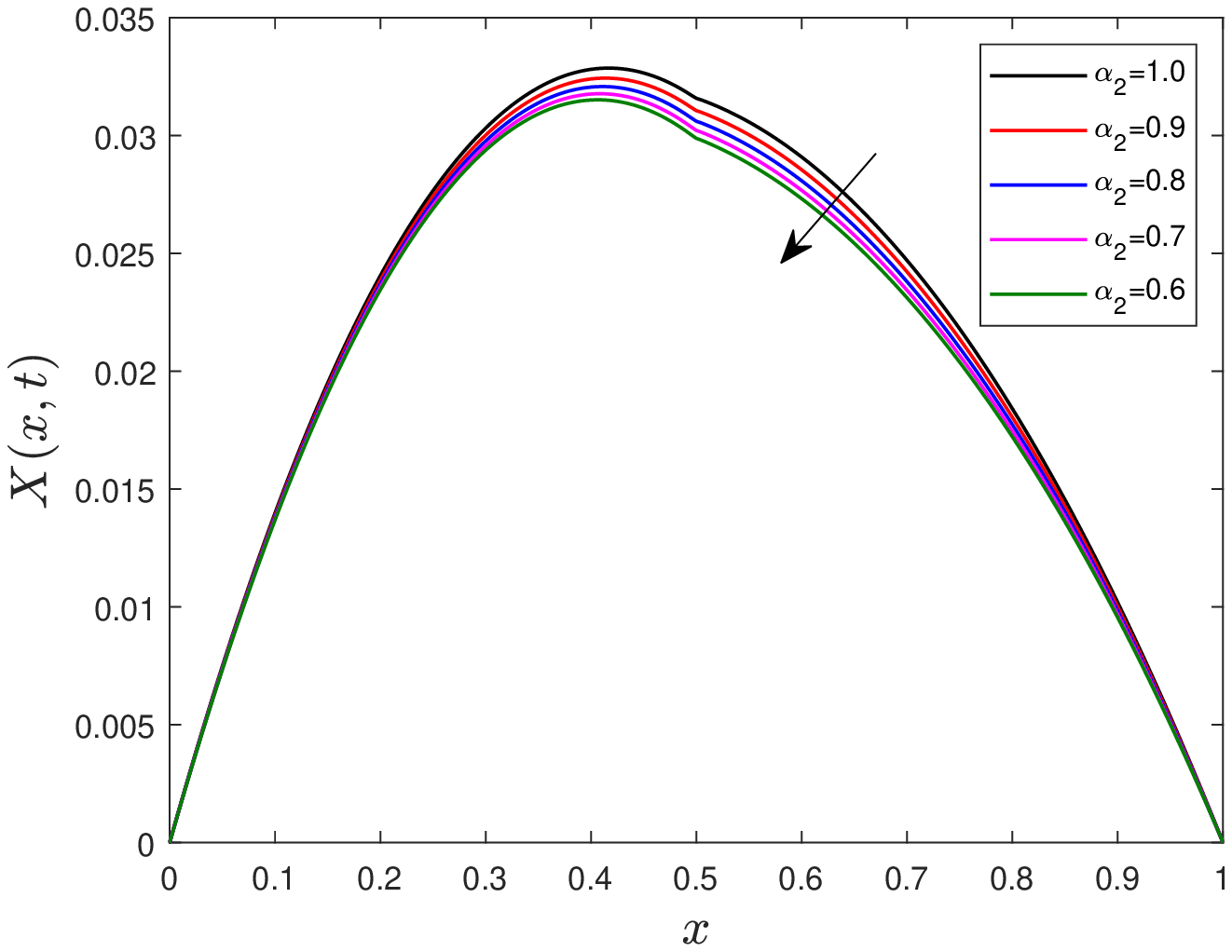}}
\end{minipage}
}
\subfloat[Varying $\rho$ ]{
\label{fig8b}
\begin{minipage}[t]{0.46\textwidth}
\centering
\scalebox{0.45}[0.45]{\includegraphics{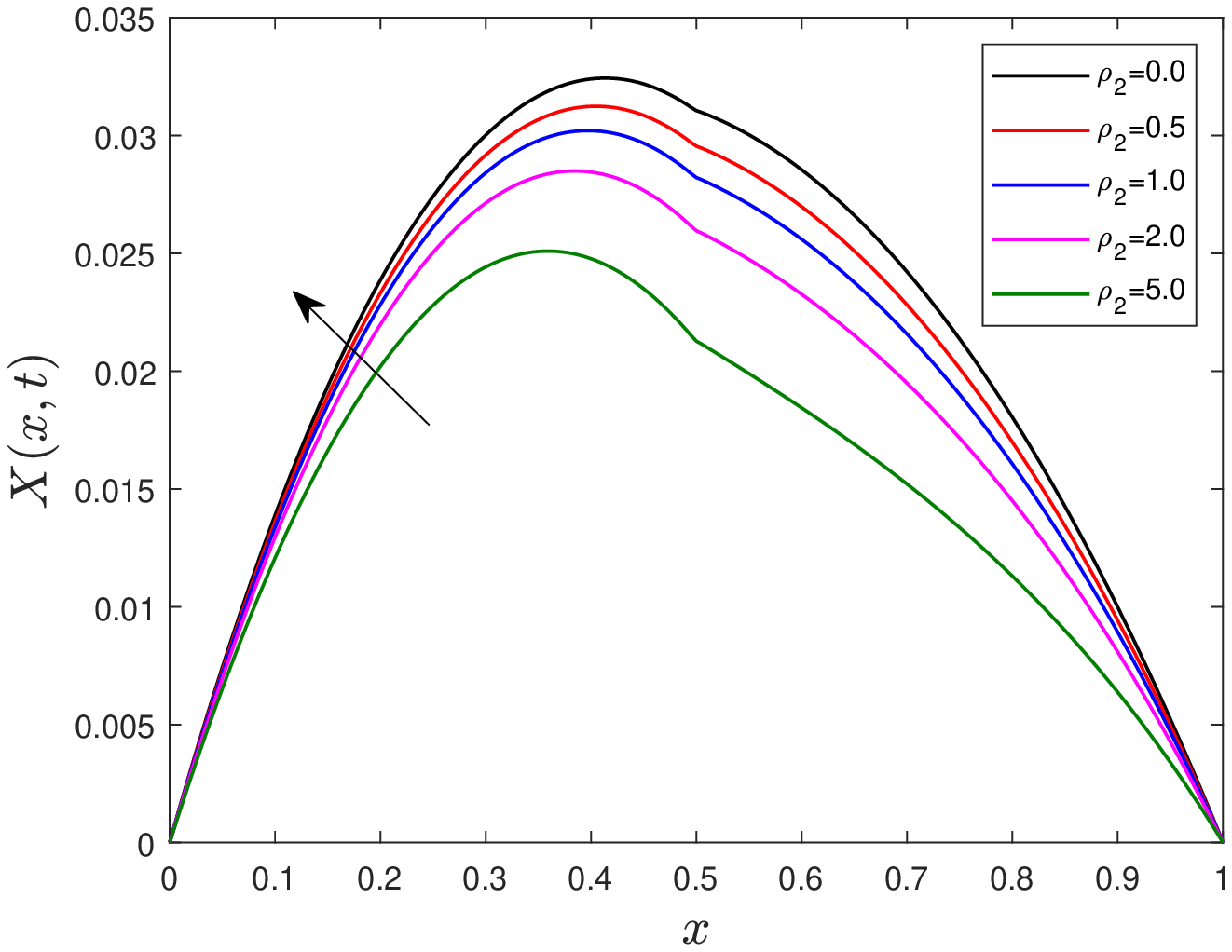}}
\end{minipage}
}
\caption{The impacts of the fractional index $\alpha$ (Figure a) and tempered parameter $\rho$ (Figure b) on the diffusion profile at $t=0.1$ with $D_1=0.25$, $D_2=0.5$, $S_{a_1}=S_{a_2}=0.1$, $S_{b_1}=S_{b_2}=-0.1$, (a) $\alpha_1=1$, $\rho_1=0$, $\rho_2=0.0$, (b) $\alpha_1=1$, $\rho_1=0$, $\alpha_2=0.9$.}
\label{fig8}
\end{figure}


\section{Conclusions}\label{Sec6}

In this paper, we extend two classical numerical schemes, the L1 scheme on graded mesh and the WSGL formula with correction terms, to deal with the benchmark problem with a tempered operator. Both schemes are effective. In addition, a fast algorithm for the time tempered Caputo derivative is developed to reduce the running time significantly. Furthermore, the tempered operator is applied to different models to investigate the tempered solution behaviour. An important finding is that, compared with the fractional index, the tempered parameter could further accelerate the diffusion, and the tempered model with two parameters $\alpha $ and $\rho $
is more flexible. In the future, we will explore high-dimensional tempered diffusion problems in heterogeneous media.

\section*{Acknowledgements} \label{Sec7}

This research was supported by the Australian Research Council via the Discovery Projects DP180103858 and DP190101889 and National Natural Science Foundation of China (No. 11801543).

\end{document}